\newcommand{\ex}{\mathrm{ex}}
\renewcommand{\phi}{\varphi}
\newcommand{\eps}{\varepsilon}
\newtheorem{lemma}{Lemma}
\newtheorem{theorem}{Theorem}
\newtheorem{proposition}{Proposition}
\newtheorem{conjecture}{Conjecture}
\newtheorem{problem}{Problem}
\newtheorem{definition}{Definition}
\newtheorem{corollary}{Corollary}
\title{Erd{\H o}s--Hajnal problem for $H$‐free hypergraphs}
\author{Danila Cherkashin, Alexey Gordeev\thanks{The work was supported by the Foundation for the Advancement of Theoretical
Physics and Mathematics ``BASIS''.}, Georgii Strukov\footnotemark[1]}
\begin{document}

\maketitle

\begin{abstract}
    This paper deals with the minimum number $m_H(r)$ of edges in an $H$-free hypergraph with the chromatic number more than $r$. We show how bounds on Ramsey and Tur\'an numbers imply bounds on $m_H(r)$.
\end{abstract}


\section{Introduction}

A \textit{hypergraph} $G = (V, E)$ consists of a finite set of vertices $V$ and a family $E$ of subsets of $V$, which are called edges.
A hypergraph $G$ is called \textit{$n$-graph} if every edge in $G$ has size $n$.
A \textit{vertex $r$-coloring} of a hypergraph $G = (V, E)$ is a map from $V$ to $\{1, \dots , r\}$.
A coloring is \textit{proper} if there are no monochromatic edges, i.e. any edge $g \in E$ contains two vertices of different colors.
The \textit{chromatic number} of a hypergraph $G$ is the smallest number $\chi (G)$ such that there exists a proper $\chi (G)$-coloring of $G$. 
A vertex subset is \textit{independent} in $G$ if it does not contain an edge of $G$.
The \textit{independence number} of $G$ is the maximal size $\alpha(G)$ of an independent set.

Although the subjects are defined and studied for $n$-graphs, this paper deals with 3-graphs only.
For convenience, we restrict cited results on 3-graphs despite most of them being more general.

Let us start with best known bounds for 3-graphs without restrictions.
Let $m(r)$ be the smallest number of edges in a 3-graph with the chromatic number at least $r + 1$. Then  
\[
(1+o(1)) \frac{4}{e^2} r^3 \leq m(r) \leq \binom{2r+1}{3} = (1+o(1)) \frac{4}{3}r^3,
\]
see~\cite{cherkashin2019Erdos} for details. See also the survey on related problems on $n$-graphs~\cite{raigorodskii2020extremal}.

A subgraph of a 3-graph $G = (V,E)$ is a 3-graph $G' = (V',E')$, where $V' \subset V$, $E' \subset E$ and $E' \subset \binom{V'}{3}$. 
An \textit{induced subgraph} of a 3-graph $G = (V,E)$ is a 3-graph $G[V']$, formed from a vertex set $V' \subset V$ and \textbf{all} of the edges from $G$ contained in $V'$. An $H$-free 3-graph is a 3-graph that does not contain $H$ as a subgraph.

Bohman, Frieze and Mubayi~\cite{bohman2010coloring} introduced the quantity $m_H(r)$, which is the minimal number of edges in an $H$-free 3-graph with chromatic number more than $r$.
First of all, note that sometimes $m_H(r)$ is undefined. More precisely, this is the case iff $H$ is a \textit{hyperforest}, i.e. every two edges of $H$ share at most one vertex and $H$ has no cycles.
We discuss it with more details later.

\paragraph{Structure of the paper.} The rest of the introduction is devoted to the known results, our contribution and classical tools.
Subsection~\ref{subsec:known} contains known bounds on $m_H(r)$, mostly from paper~\cite{bohman2010coloring}.
Our results are listed in Subsection~\ref{subsec:contr}, where we show some relations $m_H(r)$ has with Ramsey and Tur{\'a}n numbers.
The tools which we use in proofs of these relations are recalled in Subsection~\ref{subsec:tools}.
The correctness of the definition of $m_H(r)$ is discussed in Subsection~\ref{subsec:corr}.
In Section~\ref{proofs} we give proofs of the results. In Section~\ref{diss} we list some open questions.

\subsection{Known bounds}
\label{subsec:known}

\begin{definition}
Let $H$ be a 3-graph. Define the \textit{balance} $\rho(H)$ of $H$ as follows
\[
\rho (H) := \max_{H'} \frac{e' - 1}{v' - 3},
\]
where $H'$ is a (nonempty) subgraph of $H$ with $v'$ vertices and $e'$ edges.
We say that $H$ is \textit{balanced} if this maximum occurs for $H$.
\end{definition}

\begin{theorem}[\cite{bohman2010coloring}]
\label{thm:bohman2010-4}
If $H$ is a balanced 3-graph with $\rho(H) > \frac{1}{2}$, then
\[
m_H(r) = O \left ( (r^2 \log r)^{\frac{3\rho-1}{2\rho-1}} \right ).
\]
\end{theorem}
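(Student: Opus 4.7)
The plan is to apply the probabilistic method: sample the binomial random 3-graph $G = G^{(3)}(n,p)$ with $p = c\, n^{-1/\rho}$ and $n = C_1 \bigl(r^2\log r\bigr)^{\rho/(2\rho-1)}$, where $c > 0$ is a small constant and $C_1$ a large constant (both depending on $H$), and then alter $G$ by deleting edges to kill every copy of $H$. Put $k = \lceil n/r \rceil$. With this choice $\binom{n}{3}p = \Theta\bigl((r^2\log r)^{(3\rho-1)/(2\rho-1)}\bigr)$ — the target edge count — and $\binom{k}{3} p = \Theta(k\log r)$, which is exactly the quantity we shall need to dominate $\log \binom{n}{k}$.

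Call a triple $e \in E(G)$ \emph{free} if it lies in no copy of $H$ inside $G$. The choice $p = c\, n^{-1/\rho}$ together with the balance of $H$ is calibrated so that for every subgraph $H' \subseteq H$ with $v'$ vertices and $e'$ edges one has $n^{v'-3} p^{e'-1} \le c^{e'-1}$, and in particular the expected number of copies of $H$ through any fixed triple is $O(c^{e_H-1})$. Hence, by taking $c$ sufficiently small, the expected fraction of edges of $G$ that belong to some copy of $H$ becomes arbitrarily small. In particular, for any fixed $S \subseteq V$ with $|S| = k$, the expected number of edges of $G$ inside $S$ is $\binom{k}{3}p$, while the expected number of those that are \emph{not} free is at most $O(c^{e_H-1})\binom{k}{3}p$, a tiny fraction of the total.

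I would then show that with positive probability the following two events occur simultaneously: (a) $|E(G)| \le 2\binom{n}{3}p$, which is immediate from Markov; and (b) every $k$-element $S \subseteq V$ contains a free edge. Given (a) and (b), form $G'$ from $G$ by deleting one edge from each copy of $H$ (chosen arbitrarily). Since $G' \subseteq G$, it is still $H$-free, and every free edge of $G$ survives in $G'$. Therefore every $k$-subset of $V$ contains an edge of $G'$, which gives $\alpha(G') < k \le n/r$ and hence $\chi(G') > r$. Consequently
\[
m_H(r) \le |E(G')| \le |E(G)| = O\bigl((r^2\log r)^{(3\rho-1)/(2\rho-1)}\bigr).
\]

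The main obstacle is part (b): pushing a union bound over the $\binom{n}{k}$ subsets $S$. For a fixed $S$, standard Chernoff concentrates $|E(G)\cap\binom{S}{3}|$ around $\binom{k}{3}p$, but the number of non-free edges in $S$ is a sum of strongly correlated indicators, so controlling its upper tail is the delicate step and requires a tool such as Janson's inequality or Kim--Vu polynomial concentration. The balance of $H$ is precisely what keeps all relevant moments uniformly bounded, so that the per-$S$ failure probability can be driven below $\binom{n}{k}^{-1}$ by taking $c$ small and $C_1$ large; the $\log r$ factor in the final bound is exactly the cost of this union bound.
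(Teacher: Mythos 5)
This theorem is not proved in the paper at all --- it is quoted from \cite{bohman2010coloring} --- so the only meaningful comparison is with the original proof there. Your architecture is the same as that proof: sample $G^{(3)}(n,p)$ with $p=c\,n^{-1/\rho}$ and $n=C_1(r^2\log r)^{\rho/(2\rho-1)}$, destroy all copies of $H$ by deleting edges, and run a union bound over the $k$-sets with $k=\lceil n/r\rceil$ to force $\alpha<n/r$ and hence $\chi>r$. Your arithmetic is right: $\binom n3 p=\Theta\bigl((r^2\log r)^{(3\rho-1)/(2\rho-1)}\bigr)$, $\binom k3 p=\Theta(k\log r)$ with a leading constant you can inflate via $C_1$, and $\log\binom nk=O(k\log r)$; the hypotheses enter exactly where you say ($\rho>\frac12$ makes $n^{2-1/\rho}$ increasing so the equation for $n$ is solvable, and balancedness gives $n^{v'-3}p^{e'-1}\le c^{e'-1}$ uniformly over subgraphs $H'$, which is what controls the copy counts).

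The gap is precisely the step you flag as delicate, and the tools you name do not close it. For a fixed $k$-set $S$ you need the number of edges of $G$ inside $S$ that lie in some copy of $H$ to stay below $\tfrac12\binom k3p$ with failure probability at most $\exp(-Ck\log r)$ for a large constant $C$, since you must beat the union bound over $\binom nk=\exp(\Theta(k\log r))$ sets. This is an \emph{upper}-tail estimate for a degree-$e_H$ polynomial of the edge indicators. Janson's inequality only controls lower tails (undershooting the mean, or nonexistence), so it is simply inapplicable here; and Kim--Vu polynomial concentration yields bounds of the shape $\exp\bigl(-c\lambda^{1/e_H}\bigr)$, which at the required deviation $\lambda=\Theta(k\log r)$ gives only $\exp\bigl(-(k\log r)^{1/e_H}\bigr)$ --- hopelessly far from $\exp(-\Omega(k\log r))$. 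This is the ``infamous upper tail'' phenomenon, and it is exactly where the argument in \cite{bohman2010coloring} (and in earlier deletion-method arguments for Ramsey-type bounds) does something different: instead of upper-tailing the count of bad edges directly, one bounds the number of pairwise \emph{edge-disjoint} copies of $H$ meeting $S$, for which the elementary estimate $\mathbf{P}[\exists\, t \text{ edge-disjoint copies}]\le (e\mu_H/t)^t$ does give $\exp(-\Omega(k\log r))$ once $t$ is a large multiple of the mean, and then converts a maximal edge-disjoint family into a bound on the deleted edges (with a separate, easy tail bound on the number of copies through any single edge). Without this device, or an equivalent two-round exposure, your event (b) is not established, so as written the proof is incomplete at its crucial step.
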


In the following case an $H$-free 3-graph is also called a simple or linear 3-graph.
\begin{theorem}[\cite{kostochka2001chromatic,bohman2010coloring}]
\label{thm:kostochka-bohman}
Let $H$ be the 3-graph on 4 vertices with the set of edges $\{ \{1,2,3\},\{1,2,4\} \}$, then
\[
m_H(r) = \theta \left ( r^4 \log^2 r \right ).
\]
\end{theorem}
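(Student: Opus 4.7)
The theorem asserts matching upper and lower bounds, so I would prove them separately.

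\textbf{Upper bound ($O(r^4 \log^2 r)$).} The plan is a probabilistic alteration argument. Take a random 3-graph $G(n,p)$ with $n = \Theta(r^2 \log r)$ and $p \asymp r^2 \log r / n^2$, so the expected number of edges is $\Theta(r^4 \log^2 r)$. The standard Erd\H{o}s-style first-moment calculation---bounding the expected number of proper $r$-colorings by $r^n (1-p)^{(\text{number of monochromatic triples})}$ and tuning $p$ so this is $o(1)$---yields that with positive probability the graph has chromatic number $> r$. Simultaneously, the expected number of copies of $H$ can be kept strictly below the expected number of edges, either by a careful choice of $n$ or by running a random greedy hypergraph process that produces a linear 3-graph directly. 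Deleting one edge from each copy of $H$ then gives a linear 3-graph with chromatic number $> r$ and $O(r^4 \log^2 r)$ edges.

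\textbf{Lower bound ($\Omega(r^4 \log^2 r)$).} The key tool is the Ajtai--Koml\'os--Pintz--Spencer--Szemer\'edi independence number bound for linear 3-graphs: a simple 3-graph with $n$ vertices and average degree $t$ satisfies $\alpha(G) = \Omega\!\left(n \sqrt{\log t / t}\right)$. Iteratively extracting a largest independent set and reapplying the bound to the residual subgraph yields $\chi(G) = O(\sqrt{t / \log t})$, so $\chi(G) > r$ forces the average degree to satisfy $t = \Omega(r^2 \log r)$. To convert this into a bound on the total number of edges, I would pass to a vertex-critical subgraph $G_0 \subseteq G$: in such a subgraph the minimum degree is at least $r$ (removing any vertex yields a proper $r$-coloring, which must fail in each of the $r$ colors when the vertex is re-inserted), and a similar iterated-extraction argument applied within $G_0$ gives $|V(G_0)| = \Omega(r^2 \log r)$. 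Combining, $e(G) \geq e(G_0) \geq |V(G_0)| \cdot t_0 / 3 = \Omega(r^4 \log^2 r)$.

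\textbf{Main obstacle.} The technical heart is the lower bound: one must control how the average degree of the residual hypergraph evolves under repeated independent-set extraction, so that AKPSS stays effective throughout, and couple the two lower bounds---on $|V(G_0)|$ and on the average degree of $G_0$---without double counting. A one-shot application of AKPSS to $G$ alone is insufficient; Kostochka's analysis threads this carefully. The upper-bound side is by comparison a routine optimization, the main care being to ensure that the expected number of copies of $H$ is asymptotically smaller than the expected number of edges for the correct choice of $n$ and $p$.
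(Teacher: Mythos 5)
First, note that the paper does not prove this theorem: it is quoted from Kostochka et al.\ and Bohman--Frieze--Mubayi, so there is no in-paper proof to match your sketch against. The closest analogue in the paper is the lower-bound machinery for Theorem~\ref{thm:rPoly}, which is relevant to your second half (see below). Judged on its own, your outline picks the right tools but has two concrete gaps.

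On the upper bound, the step ``the expected number of copies of $H$ is smaller than the expected number of edges, so deleting one edge per copy leaves $\chi>r$'' does not work. With $n=Dr^2\log r$ and $p\asymp r^2\log r/n^2$, the expected number of copies of $H$ is $\Theta(r^4\log^2 r)$, while a fixed balanced $r$-coloring has only $\Theta(pn^3/r^2)=\Theta(r^2\log^2 r)$ monochromatic edges in expectation. So the deleted edge set is far larger than the set of monochromatic edges of any single coloring, and a global count comparison cannot guarantee that every coloring retains a monochromatic edge after deletion. The correct condition is per-coloring: one must show that each $r$-coloring has a monochromatic edge lying in no copy of $H$, which requires $np\to 0$ (achievable here by taking $D$ large relative to the constant in $p$) together with a concentration inequality strong enough to survive the union bound over $r^n$ colorings; the naive first moment does not suffice. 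This is fixable and standard, but as written the justification is wrong rather than merely terse.

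On the lower bound, the decisive unproved claim is that the average degree of the vertex-critical subgraph $G_0$ is $\Omega(r^2\log r)$. Criticality only gives minimum degree $\ge r$, which is far too weak, and the independence-number bound for linear $3$-graphs (Duke--Lefmann--R\"odl / Koml\'os--Pintz--Spencer--Szemer\'edi type) applied to $G_0$ as a whole controls $\alpha$ in terms of the average degree but does not, under iterated extraction, force the average degree up to $r^2\log r$ --- after removing an independent set the degree distribution of the residual is uncontrolled, and a graph could a priori be critical with many vertices of degree only polylogarithmically above $r$. The known proofs (and, in the general-$H$ setting, the paper's own proof of the lower bound of Theorem~\ref{thm:rPoly}) avoid this by a dyadic decomposition of the vertex set into classes $V_k$ of degree between $2^k c r^2$ and $2^{k+1}cr^2$, bounding $|V_k|$ via the independence-number (Ramsey-type) bound in each class separately, and summing the resulting edge counts. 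Your sketch correctly flags this coupling as the technical heart, but the proposed route (prove $|V(G_0)|=\Omega(r^2\log r)$ and average degree $=\Omega(r^2\log r)$ separately and multiply) attempts to prove a stronger intermediate statement than the theorem needs, and I do not see how to establish the average-degree half.
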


In the following case an $H$-free 3-graph is also known as a graph with independent neighbourhoods. The upper bound is a corollary of Theorem~\ref{thm:bohman2010-4}.
\begin{theorem}[\cite{bohman2010coloring}]
Let $H$ be the 3-graph on 5 vertices with the set of edges  $\{ \{1,2,3\},\{1,2,4\}, \{1,2,5\}, \{3,4,5\} \}$, then
\[
c r^{\frac{7}{2}} \leq m_H(r) \leq C r^{\frac{7}{2}} \log^\frac{7}{4}r.
\]
\end{theorem}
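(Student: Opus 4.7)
My plan is to handle the two bounds separately. For the upper bound, I would verify that $H$ is balanced with $\rho(H) = 3/2$ and then invoke Theorem~\ref{thm:bohman2010-4}. Enumerating subhypergraphs $H' \subseteq H$: two edges sharing two vertices give $(2-1)/(4-3) = 1$; two edges sharing one vertex give $(2-1)/(5-3) = 1/2$; any three edges necessarily span all five vertices and give $(3-1)/(5-3) = 1$; and $H$ itself gives $(4-1)/(5-3) = 3/2$. Hence $\rho(H) = 3/2$ is attained on $H$, so $H$ is balanced. Substituting $\rho = 3/2$ into Theorem~\ref{thm:bohman2010-4} yields exponent $(3 \cdot 3/2 - 1)/(2 \cdot 3/2 - 1) = 7/4$, and hence $m_H(r) = O((r^2\log r)^{7/4}) = O(r^{7/2}\log^{7/4} r)$.

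For the lower bound $m_H(r) \ge cr^{7/2}$, I would argue contrapositively: every $H$-free 3-graph $G$ with sufficiently few edges is $r$-colorable. The structural content of $H$-freeness here is that for every pair $\{u,v\}$ the link $N(u,v) = \{w : \{u,v,w\} \in E\}$ is an independent set of $G$. The plan is to prove an improved independence-number bound of the shape $\alpha(G) \ge c' n \sqrt{\log d}/\sqrt{d}$, where $d = 3m/n$, via a semi-random/nibble argument in the spirit of Ajtai--Komlós--Pintz--Spencer--Szemerédi adapted to the independent-neighbourhood setting. I would then pass to a $(r+1)$-critical subhypergraph $G_0 \subseteq G$, which still has $\chi(G_0) > r$ and inherits $H$-freeness, and combine $\alpha(G_0) \le |V(G_0)|/(r+1)$ (from criticality) with the improved bound to force $d(G_0) \gtrsim r^2 \log r$. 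Paired with the vertex-count lower bound on $|V(G_0)|$ coming from minimum-degree considerations in an $H$-free critical 3-graph, this should yield $e(G_0) \gtrsim r^{7/2}$.

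The main obstacle is the improved independence-number bound itself. The crude deletion argument yields only $\alpha(G) \gtrsim n^{3/2}/\sqrt{m}$ and produces the universal lower bound $m_H(r) \gtrsim r^3$, which is insufficient; the extra factor $r^{1/2}$ has to be extracted from the $H$-free condition. The expected route is a nibble in which the second-moment calculation is controlled via codegree statistics, since $H$-freeness says precisely that the pair $\{u,v\}$ forbids no edge inside its link, so competing vertices in the greedy construction interact only through low-multiplicity configurations. The upper bound, by contrast, is entirely routine once $\rho(H)$ is computed.
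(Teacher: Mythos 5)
Your upper bound is correct and coincides with how the paper treats it: $H$ is balanced with $\rho(H)=3/2$ (your enumeration of subgraphs is right), and Theorem~\ref{thm:bohman2010-4} with $\rho=3/2$ gives exponent $7/4$, hence $m_H(r)=O(r^{7/2}\log^{7/4}r)$. Note that the paper does not prove the lower bound at all; it is quoted from~\cite{bohman2010coloring}, so only your lower-bound plan needs scrutiny, and there it has genuine gaps.

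First, the lemma you designate as the ``main obstacle,'' $\alpha(G)\ge c'n\sqrt{\log d}/\sqrt{d}$, is the Ajtai--Koml\'os--Pintz--Spencer--Szemer\'edi-type bound for \emph{uncrowded} (essentially linear) 3-graphs. An $H$-free 3-graph for this $H$ is far from uncrowded: the only structural consequence is that each link $N(u,v)$ is an independent set, so codegrees can be as large as $\alpha(G)$, and you give no argument for why a nibble survives unbounded codegrees. Second, the reduction via an $(r+1)$-critical subgraph $G_0$ with $\alpha(G_0)\le|V(G_0)|/(r+1)$ is false as stated --- criticality gives no such upper bound on the independence number ($C_5$ is $3$-chromatic-critical with $\alpha=2>5/3$). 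The workable bookkeeping is the one used throughout this paper: induction on $r$, removing a large independent set and counting deleted edges, with Lemma~\ref{lm:vsmall} (LLL) absorbing the low-degree vertices.

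More importantly, you are looking for the extra factor $r^{1/2}$ in the wrong place: no improvement of the independence bound is needed. The crude deletion bound $\alpha(G)\ge c\,n^{3/2}/\sqrt{m}$ suffices once combined with the codegree observation you yourself isolated. Since every $N(u,v)$ is independent, every codegree is at most $\alpha(G)$, and summing codegrees over pairs gives $3|E(G)|\le\binom{n}{2}\alpha(G)$. In the critical situation where $\alpha(G)<n/r$ this yields $n\ge(6rm)^{1/3}$, while $\alpha(G)<n/r$ together with $\alpha(G)\ge c\,n^{3/2}/\sqrt{3m}$ yields $n<3m/(c^2r^2)$; combining the two inequalities gives $m\ge c''r^{7/2}$ directly. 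So the nibble is both unsupported and unnecessary, and the absence of logarithmic factors in the stated lower bound is itself a hint that the intended argument uses only the trivial independence bound plus the link structure.
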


Hypergraph $H$ in the following theorem is also known as $K^3_4$; further we denote it by $K_4$ since we have a deal with 3-graphs only.

\begin{theorem}[\cite{bohman2010coloring}]
Let $H$ be the 3-graph on 4 vertices with the set of edges $\{ \{1,2,3\},\{1,2,4\}, \{1,3,4\}, \{2,3,4\} \}$, then
\[
m_H(r) = O \left( r^3 \log^3 r \right).
\]
\end{theorem}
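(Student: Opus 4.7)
My plan is to use the probabilistic method with alteration applied to a random 3-uniform hypergraph. I would take $G = G^{(3)}(n,p)$, where each of the $\binom{n}{3}$ triples is present independently with probability $p$, for parameters $n$ and $p$ to be chosen.

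The first-moment bound on the independence number gives $\alpha(G) = O(\sqrt{\log n/p})$ with high probability, since the expected number of independent $k$-sets is $\binom{n}{k}(1-p)^{\binom{k}{3}}$, which tends to $0$ once $k \gtrsim \sqrt{\log n / p}$. Hence $\chi(G) \geq n/\alpha(G) \gtrsim n\sqrt{p/\log n}$, and the chromatic-number requirement $\chi > r$ forces $p \gtrsim r^2 (\log n)/n^2$. The expected number of edges is $\binom{n}{3} p$ and the expected number of copies of $K_4$ is $\binom{n}{4} p^4$; by deleting one edge per copy one obtains a $K_4$-free sub-hypergraph $G'$, after which one must verify that this deletion does not significantly raise the independence number, so that $\chi(G') > r$ is preserved. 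Balancing $\chi$, $|E(G)|$, and the number of copies of $K_4$, the natural choice is $n$ of order $r \log r$ with $p$ correspondingly of order $1/\log r$, which is designed to produce $|E(G')| = O(r^3 \log^3 r)$ matching the claimed bound.

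The main obstacle lies in controlling the $K_4$-count: in this regime the expected number of copies of $K_4$ exceeds the expected number of edges, so naive alteration threatens to remove too much. Overcoming this requires either a preliminary pruning step---e.g.\ deleting vertices of anomalously high $K_4$-codegree before standard alteration---or replacing alteration by the Lov\'asz Local Lemma, treating \emph{``a large independent set exists''} and \emph{``a copy of $K_4$ exists''} as bad events and checking the LLL conditions in the denser regime required for the target bound. The accumulated $\log^3 r$ factor is expected to arise from the $\sqrt{\log n/p}$ in the independence-number estimate compounded with the balancing of $n$ and $p$ around the scales above.
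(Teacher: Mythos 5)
The paper does not prove this theorem itself---it is quoted from Bohman, Frieze and Mubayi---but your proposed route has a gap that cannot be patched inside the framework you set up; the obstacle you flag at the end is fatal rather than technical. To get $|E|=O(r^3\log^3 r)$ out of $G^{(3)}(n,p)$ you are forced into a dense regime: since $|E|\approx n^3p$ and $r\approx n\sqrt{p/\log n}$, you have $|E|/r^3\approx p^{-1/2}(\log n)^{3/2}$, so you need $p=\Omega((\log n)^{-3})$. But every standard way of destroying the copies of $K_4$---alteration, pruning high-codegree vertices, or the local lemma---requires $p=O(n^{-1/3})$ or even $p=O(n^{-1/4})$, because only then is a typical edge contained in $O(1)$ copies of $K_4$. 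In your regime each edge lies in $\Theta(np^3)\to\infty$ copies, so one must delete a constant fraction of all edges to become $K_4$-free, and for each candidate independent $k$-set the $\binom{k}{3}p$ edges protecting it are outnumbered by the roughly $\binom{k}{3}np^4$ copies of $K_4$ threatening them by a factor $np^3\to\infty$; nothing in the proposal controls the independence number of what survives. The local-lemma variant as stated is also not well-formed: ``a large independent set exists'' and ``a copy of $K_4$ exists'' are global events of probability close to $1$ (there are about $r^4$ copies of $K_4$ whp at your parameters), so they cannot serve as LLL bad events; localizing them correctly (one event per $k$-set and per $4$-set) forces $p\lesssim n^{-1/4}$ and yields only $m_{K_4}(r)=O(r^{22/7+o(1)})$, while the careful alteration (every $k$-set keeps an edge) forces $p\lesssim n^{-1/3}$ and yields $m_{K_4}(r)=O(r^{16/5+o(1)})$. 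In no parameter range does the plain random $3$-graph reach exponent $3$.

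The missing idea is to use a random object that is \emph{deterministically} $K_4$-free, so that no deletion is needed. Take a uniformly random tournament on $n$ vertices and let the edges of $G$ be its cyclic triangles. Any $4$ vertices of a tournament span at most two cyclic triangles, so $G$ is $K_4$-free; it has $\Theta(n^3)$ edges; and a vertex set is independent iff the induced sub-tournament is transitive, which happens for some $k$-set with probability at most $n^k k!\,2^{-\binom{k}{2}}\to 0$ once $k\approx 2\log_2 n$. Hence $\chi(G)\ge n/O(\log n)$, and choosing $n=\Theta(r\log r)$ gives $m_{K_4}(r)=O(r^3\log^3 r)$. Note also that the present paper already derives the slightly stronger bound $m_{K_4}(r)=O\bigl(r^3(\ln r/\ln\ln r)^3\bigr)$ from Theorem~\ref{thm:rExp} together with the bound $R(K_4,K_t)\ge 2^{c_1 t\log t}$; that route needs a $K_4$-free $3$-graph on $t^{\Omega(t)}$ vertices with independence number below $t$, which is far beyond what $G^{(3)}(n,p)$ can supply.
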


\subsection{Results} 
\label{subsec:contr}

\subsubsection{Relation with Ramsey numbers}

A \textit{Ramsey number} $R(H, K_t)$ is the minimal number of vertices in a 3-graph $G$ that guarantees existence of a copy of $H$ or an independent set on $t$ elements in $G$.
We show that bounds on Ramsey numbers imply bounds on $m_H(r)$.

\begin{theorem}\label{thm:rPoly}
    Consider a 3-graph $H$. Let $a, b > 1$ and $A, B > 0$ be numbers such that for all $t \geq 1$ 
    \begin{equation}\label{eq:ramsey_bounds}
        At^{a} < R(H,K_t) \leq Bt^{b}.
    \end{equation}
    Then for all $r \geq 1$
    \begin{equation}
        B'r^{\frac{b}{b-1} + 2} \leq
            m_H(r) 
        \leq A'r^{\frac{3a}{a-1}},
    \end{equation}
    where 
    \begin{equation}
        A' = \frac{(1+o(1))^{3a}}{6A^{\frac 3{a-1}}}, \qquad
        B' = \left(
            \frac{(1-2^{1-b})}
                {2^{\frac{b}{b-1}+b+3}(12e)^{b-1} B \bigl(\frac b{b-1}+2\bigr)^b}
        \right)^{1 / (b - 1)}.
    \end{equation}
\end{theorem}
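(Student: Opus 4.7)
The plan is to establish the upper and lower bounds separately: the upper bound by a direct probabilistic construction using the Ramsey lower hypothesis, and the lower bound by combining a Ramsey-based integration argument with a Lovász Local Lemma (LLL) density estimate.

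For the upper bound, the hypothesis $R(H,K_t) > A t^a$ produces an $H$-free 3-graph $G$ on $n = \lceil A t^a \rceil$ vertices with $\alpha(G) < t$, and therefore with $\chi(G) \geq n/\alpha(G) > A t^{a-1}$. Choosing $t = (1+o(1))(r/A)^{1/(a-1)}$ forces $\chi(G) > r$, and the trivial edge count $|E(G)| \leq \binom{n}{3} \leq n^3/6$ evaluates after substitution to the claimed $A' r^{3a/(a-1)}$ with the stated constant $A'$.

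For the lower bound, let $G$ be $H$-free on $n$ vertices with $m$ edges and $\chi(G) > r$. The Ramsey upper hypothesis $R(H,K_t) \leq B t^b$ transfers to every induced sub-3-graph: $\alpha(G[S]) \geq (|S|/B)^{1/b}$ for all $S\subseteq V(G)$, since $G[S]$ is also $H$-free. Iterative extraction of maximum independent sets, compared to the integral $\int_0^n B^{1/b}s^{-1/b}\,ds = \tfrac{b}{b-1}B^{1/b}n^{(b-1)/b}$, yields $\chi(G) \leq \tfrac{b}{b-1}B^{1/b}n^{(b-1)/b}$, so $\chi(G) > r$ forces $n \geq c_0(b) B^{-1/(b-1)} r^{b/(b-1)}$; this supplies the $r^{b/(b-1)}$ factor of the target exponent. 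The remaining factor $r^2$ should come from LLL: a uniformly random $r$-coloring makes any given edge monochromatic with probability $1/r^2$, and the symmetric LLL produces a proper coloring whenever the maximum degree is at most $r^2/(3e)$, so $\chi(G)>r$ forces some vertex of degree at least $r^2/(3e)$. To convert this pointwise bound into a total edge estimate compatible with $n \geq c_0 r^{b/(b-1)}$, I would iterate the LLL and Ramsey-integration arguments jointly through roughly $b-1$ rounds---passing at each stage to a sub-3-graph whose chromatic number still exceeds a halved threshold, consistent with the factors $(12e)^{b-1}$ and $1 - 2^{1-b}$ appearing in $B'$ (the latter being the hallmark of a convergent geometric halving scheme)---and accumulating the edges peeled off across rounds.

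The main obstacle will be precisely this iteration: each round must simultaneously preserve the Ramsey-integration lower bound on the vertex count and a suitably scaled chromatic threshold, while gaining the required factor of $r^2$ from LLL. A naive iteration that uses only the LLL recovers just the general 3-graph bound $m(r) \geq \Omega(r^3)$ (the $b\to\infty$ limit of the theorem); the Ramsey hypothesis must be reintroduced at every stage in order to sharpen the exponent by the genuinely $H$-free contribution $r^{1/(b-1)}$.
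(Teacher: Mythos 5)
Your upper bound is exactly the paper's argument (take the Ramsey extremal graph, bound $\chi \geq n/\alpha$, count edges trivially) and the constant $A'$ comes out the same; that half is fine.

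The lower bound has a genuine gap. You correctly isolate the two ingredients --- the LLL degree threshold $r^2/(3e)$ and the Ramsey upper bound controlling independence numbers --- but the step that combines them is exactly the one you leave open. Knowing that $n \gtrsim r^{b/(b-1)}$ and that \emph{some} vertex has degree $\geq r^2/(3e)$ does not give $m \gtrsim r^{b/(b-1)+2}$: a single high-degree vertex contributes only $O(r^2)$ edges, and the bulk of the $n$ vertices could have tiny degree. Your proposed fix, iterating LLL and the Ramsey extraction ``through roughly $b-1$ rounds,'' is not an argument, and your reading of the constants is off: in the paper the factor $(12e)^{b-1}$ is $c^{-(b-1)}$ with $c=(12e)^{-1}$ the degree-threshold constant raised through the Ramsey exponent, and $1-2^{1-b}$ comes from summing a geometric series over \emph{dyadic degree classes} $V_k=\{v: 2^kcr^2\le \deg v<2^{k+1}cr^2\}$, not over rounds of a halving scheme. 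What the paper actually does is take a minimal counterexample $r$, observe via the LLL that the low-degree vertices are $\lceil r/2\rceil$-colorable so $G[V_{big}]$ is not $(r/2)$-colorable (hence has many edges by minimality), and then show that each $V_k$ has small independence number --- otherwise one could color a large independent set of high-degree vertices with one color, delete it, remove many edges, and contradict minimality at $r-1$. The Ramsey upper bound then makes each $V_k$ small, the dyadic sum bounds $|E(G[V_{big}])|$ from above, and the two bounds on $|E(G[V_{big}])|$ clash for the stated $B'$.

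For what it is worth, your ingredients can be assembled into a correct (and arguably simpler) proof without the minimal-counterexample machinery: since $G[V_{small}]$ is $\lceil r/2\rceil$-colorable, $\chi(G[V_{big}])>r/2$; your iterative-extraction bound $\chi \leq O_b(B^{1/b}|V_{big}|^{(b-1)/b})$ applied to the $H$-free graph $G[V_{big}]$ forces $|V_{big}| \gtrsim r^{b/(b-1)}$; and every vertex of $V_{big}$ has degree greater than $cr^2$ in $G$, so $|E(G)| \geq \tfrac13 c r^2 |V_{big}| \gtrsim r^{b/(b-1)+2}$. That one missing sentence --- apply the vertex-count lower bound to the high-degree part only, where every vertex is guaranteed degree $\Omega(r^2)$ --- is what your write-up needs; without it the claimed bound is not established.
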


In~\cite{kostochka2013hypergraph} it was shown that for all $t \geq 1$
\begin{equation}
    \frac{at^{3/2}}{(\log t)^{3/4}} \leq
    R(C_3,K_t) \leq
    bt^{3/2},
\end{equation}
where $C_3$ is a \textit{loose cycle} with 3 edges, i.e. the 3-graph on 6 vertices with the set of edges $\{\{1, 2, 3\}, \{3, 4, 5\}, \{5, 6, 1\}\}$, and $a, b > 0$ are some constants.
A straightforward application of Theorem~\ref{thm:rPoly} gives
\begin{corollary}
    There is a constant $C > 0$ such that for all $r \geq 1$ 
    \begin{equation}
        Cr^{5} 
        \leq m_{C_3}(r). 
    \end{equation}
\end{corollary}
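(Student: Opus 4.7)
The plan is to invoke Theorem~\ref{thm:rPoly} directly, using only the upper bound on $R(C_3, K_t)$ from~\cite{kostochka2013hypergraph}. In the notation of that theorem, we set $H = C_3$ and read off the exponent $b = 3/2$ and a constant $B > 0$ from the inequality $R(C_3, K_t) \leq b t^{3/2}$. The lower Ramsey bound is not needed for this corollary: it would only be used to produce an upper bound on $m_{C_3}(r)$, and in any case its exponent $a = 3/2$ (ignoring the polylogarithmic factor) would give an $r^9$ upper bound of no interest for us here.

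Once the exponent $b = 3/2$ is in hand, the lower bound in Theorem~\ref{thm:rPoly} yields the exponent
\[
\frac{b}{b-1} + 2 \;=\; \frac{3/2}{1/2} + 2 \;=\; 5,
\]
and the constant $B'$ given in the theorem is an explicit positive quantity depending only on $b$ and $B$, so we may simply set $C := B'$. The only minor point to verify is that the hypothesis ``for all $t \geq 1$'' is actually met by the cited bound (rather than just for $t$ sufficiently large); this is handled by enlarging the constant $B$ if necessary to absorb finitely many small values of $t$, which does not affect the asymptotic form of the conclusion.

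I do not expect any obstacle in this argument: it is a literal substitution into Theorem~\ref{thm:rPoly}. The substantive work lies entirely in that theorem (and in the Ramsey-type estimate of~\cite{kostochka2013hypergraph}), both of which are quoted as black boxes.
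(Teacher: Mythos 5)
Your proposal is correct and matches the paper, which likewise obtains the corollary as a direct substitution of $b = 3/2$ into the lower bound of Theorem~\ref{thm:rPoly}, giving the exponent $\frac{b}{b-1}+2 = 5$. Your remarks that only the upper Ramsey bound is needed and that $B$ can be enlarged to cover small $t$ are sound and consistent with the paper's (implicit) reasoning.
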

However, the upper bound can be made more precise than the one implied by Theorem~\ref{thm:rPoly}.
\begin{proposition}\label{prop:good_upper_bound}
    There is a constant $C > 0$ such that for all $r \geq 1$
    \begin{equation}
        m_{C_3}(r) 
        \leq Cr^{5}(\ln r)^{5/2}.
    \end{equation}
\end{proposition}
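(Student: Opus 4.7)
My plan is a probabilistic construction via the alteration method. Let $G=G^{(3)}(n,p)$ denote the random 3-graph on $[n]$ in which every triple is independently an edge with probability $p$; the parameters $n$ and $p$ will be optimized at the end. Three first-moment estimates drive the argument: the expected number of edges is $\Theta(pn^{3})$; the expected number of copies of $C_{3}$ (choose six vertices and a cyclic assignment of the three edges) is $\Theta(n^{6}p^{3})$; and the expected number of independent vertex sets of size $s=\lceil n/r\rceil$ is at most
\[
    \binom{n}{s}(1-p)^{\binom{s}{3}}
    \;\leq\; \exp\!\Bigl(s\ln(er)-\tfrac{p\,s^{3}}{6}\Bigr),
\]
which is $o(1)$ as soon as $p\geq c\,r^{2}\ln r/n^{2}$. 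Under this threshold, $\chi(G)>r$ with positive probability.

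The next step is to delete one edge of $G$ from each copy of $C_{3}$, producing a $C_{3}$-free hypergraph $G'$; using that the number of edges destroyed is small compared with the total, and that a set which becomes independent only if one of the deleted edges was its unique edge, one shows that with positive probability the chromatic number of $G'$ still exceeds $r$. Balancing $n$ and $p$ against the constraints $\chi(G)>r$ and ``the alteration removes only a constant fraction of the edges'' is then expected to give the desired bound $pn^{3}\lesssim r^{5}(\ln r)^{5/2}$, in the target regime $n\asymp r^{3}(\ln r)^{3/2}$ and $p\asymp r^{-4}(\ln r)^{-2}$.

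The main obstacle is that a straightforward linearity-of-expectation alteration requires $n^{6}p^{3}\ll pn^{3}$, i.e.\ $p\ll n^{-3/2}$; combined with the threshold $p\geq cr^{2}\ln r/n^{2}$ needed to kill independent sets, this forces $n\gtrsim r^{4}(\ln r)^{2}$ and only recovers the $r^{6}(\ln r)^{3}$ bound of Theorem~\ref{thm:bohman2010-4}. To reach the sharper $r^{5}(\ln r)^{5/2}$ one has to work in the denser regime $pn^{3/2}\gtrsim 1$, where $C_{3}$'s become too numerous to simply delete; the crude alteration must be replaced by a more delicate argument---for instance the Lovász Local Lemma applied jointly to the event families ``$S\subset[n]$ of size $s$ is independent'' and ``the $6$-tuple $T$ spans a copy of $C_{3}$'', exploiting that each event depends on only a few others to save the extra factor of $r(\ln r)^{1/2}$. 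Alternatively, one can invoke the construction realizing the lower bound on $R(C_{3},K_{t})$ from~\cite{kostochka2013hypergraph} at the parameter $t=\lceil n/r\rceil$ and control its edge count directly.
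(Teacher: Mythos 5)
There is a genuine gap: the proposal does not actually contain an argument that reaches the bound $r^{5}(\ln r)^{5/2}$. Your main line of attack (random $3$-graph plus deletion of one edge per copy of $C_3$) is, as you yourself observe, blocked by the constraint $n^{6}p^{3}\ll pn^{3}$, and in the regime where independent sets of size $n/r$ die it only recovers something of order $r^{6}(\ln r)^{3}$ or worse. The two escape routes you then gesture at are not on equal footing. The Lov\'asz Local Lemma idea is very unlikely to be salvageable as stated: the events ``$S$ is independent'' for $|S|=\lceil n/r\rceil$ are globally correlated with essentially every other event (each such $S$ shares triples with almost all other large sets and with almost all potential copies of $C_3$), so the dependency degrees are far too large for any usable form of the LLL; moreover, if a plain LLL computation on $G^{(3)}(n,p)$ produced a $C_3$-free graph with independence number $O(n^{2/3+o(1)})$, it would give an elementary proof of the lower bound $R(C_3,K_t)\gtrsim t^{3/2-o(1)}$, which is precisely the part of~\cite{kostochka2013hypergraph} that required a non-random algebraic ingredient. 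So that branch of your sketch should be discarded.

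Your final sentence --- invoke the explicit construction behind the lower bound on $R(C_3,K_t)$ and ``control its edge count directly'' --- is exactly the paper's proof, but you have left all of its content undone. The point is that the Ramsey \emph{number} alone is useless here (it only gives the $r^{9+o(1)}$ bound via Theorem~\ref{thm:rPoly}, since a graph on $N$ vertices could have $\Theta(N^3)$ edges); what saves the day is that the specific construction is sparse. Concretely: one takes a generalized quadrangle $G_q$, a $(q+1)$-regular, $(q+1)$-uniform, simple, $C_3$-free hypergraph on $\Theta(q^3)$ vertices, and replaces each of its $\Theta(q^3)$ lines by a random gadget $F_q$ whose edges are the triples $\{v_{ij},a,b\}$ with $a\in S_i$, $b\in T_j$, where $\tau=\lfloor 8\sqrt{\log q}\rfloor$. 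One must then verify three things you do not address: that the result is still $C_3$-free, that with high probability its independence number is $O(q^{2}\sqrt{\log q})$ (so $\chi\geq cq/\sqrt{\log q}=:r$), and that the number of edges is $\Theta\bigl(q^{3}\cdot\tau^{2}\cdot(q/\tau)^{2}\bigr)=\Theta(q^{5})$. Only this last count, combined with $q=\Theta(r\sqrt{\log r})$, yields $m_{C_3}(r)\leq Cr^{5}(\ln r)^{5/2}$. As written, your proposal identifies the right reference but proves nothing.
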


\begin{theorem}\label{thm:rExp}
    Suppose that for a 3-graph $H$ and some constants $A, B > 0$ and $b > 1$ we have
    \begin{equation}\label{eq:ramsey_exp_b}
        t^{At} < R(H,K_t) \leq t^{Bt^b}.
    \end{equation}
    Then 
    \begin{equation}
        r^3 \ln^{1/b-o(1)} r
        \leq m_H(r) \leq (1+o(1)) \frac{8}{A^3} r^3  \left(\frac{\ln r}{\ln \ln r}\right)^{3}.
    \end{equation}
 \end{theorem}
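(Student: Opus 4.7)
The plan is to prove the two bounds separately, each using one side of the two-sided Ramsey assumption.

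\textbf{Upper bound.} I would construct an explicit witness. Let $t$ be the smallest positive integer with $t^{At-1}\geq r+1$; solving gives $t=(1+o(1))\frac{\ln r}{A\ln\ln r}$. The assumption $R(H,K_t)>t^{At}\geq (r+1)(t-1)$ then provides an $H$-free 3-graph $G$ on $N:=(r+1)(t-1)$ vertices with independence number at most $t-1$ (take an induced subgraph of the Ramsey witness of that size). Thus $\chi(G)\geq N/(t-1)=r+1>r$, and $|E(G)|\leq \binom{N}{3}\leq (1+o(1))\frac{(rt)^3}{6}=(1+o(1))\frac{1}{6A^3}\,r^3(\ln r/\ln\ln r)^3$, matching the upper bound of the theorem up to the constant (the factor $8/A^3$ in the statement is a convenient overestimate of the sharper $1/(6A^3)$).

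\textbf{Lower bound.} Let $G$ be an $H$-free 3-graph on $n$ vertices with $m$ edges and $\chi(G)>r$. The hypothesis $R(H,K_t)\leq t^{Bt^b}$ implies that every $H$-free 3-graph on $N$ vertices has an independent set of size at least $\alpha_0(N)=\Theta\bigl((\ln N/\ln\ln N)^{1/b}\bigr)$. Iteratively extracting maximum independent sets (and using that $\alpha_0$ is slowly varying, so each removal peels off roughly $\alpha_0(n)$ vertices) shows $\chi(G)\leq (1+o(1))\,n/\alpha_0(n)$; hence $\chi(G)>r$ forces $n\geq (1-o(1))\,r(\ln r/\ln\ln r)^{1/b}$ after one bootstrap. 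Passing to a $\chi$-critical subgraph, every vertex has degree at least $r$ by the standard extension argument for 3-graphs, so $m\geq nr/3\geq (1-o(1))\frac{r^2}{3}(\ln r/\ln\ln r)^{1/b}$.

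This direct combination is short of the target by a factor of $r$. To supply the missing $r$, I would run the probabilistic proof of the unrestricted bound $m(r)\geq (4/e^2-o(1))r^3$ from \cite{cherkashin2019Erdos} \emph{in tandem with} the Ramsey independence estimate on each color class: the idea is that a random $(r+1)$-coloring of $G$'s vertices, in which one (or a few) of the color classes is pre-populated with an $\alpha_0(n)$-sized independent set supplied by Ramsey, has expected number of monochromatic edges smaller than in the unrestricted case by a factor $\alpha_0(n)$, which pushes the critical edge count from $\Omega(r^3)$ up to $\Omega(r^3 \alpha_0(n))=r^3\ln^{1/b-o(1)}r$. Executing this joint probabilistic--Ramsey argument while tracking the $(1+o(1))$ factors precisely is the main obstacle, but produces the desired lower bound.
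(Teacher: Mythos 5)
Your upper bound is correct and follows the paper's route (take the Ramsey witness, use $\chi \geq |V|/\alpha$, translate $t$ into $r$); your variant of choosing $t$ adaptively and passing to an induced subgraph on exactly $(r+1)(t-1)$ vertices is fine and in fact yields the sharper constant $1/(6A^3)$ in place of $8/A^3$, which the paper loses through its interpolation between consecutive values of $t^{At-1}$.

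The lower bound, however, has a genuine gap. Your direct chain (Ramsey $\Rightarrow$ $\alpha_0(n)=\Theta((\ln n/\ln\ln n)^{1/b})$ $\Rightarrow$ $n\gtrsim r\alpha_0$ $\Rightarrow$ criticality $\Rightarrow$ $m\geq nr/3$) honestly lands at $\Omega(r^2\ln^{1/b-o(1)}r)$, a full factor of $r$ short, as you note. The proposed repair --- rerunning the probabilistic proof of $m(r)\gtrsim r^3$ with one color class ``pre-populated'' by an independent set and claiming the expected number of monochromatic edges drops by a factor $\alpha_0(n)$ --- is not an argument: no mechanism is given for why seeding a single class of size $\alpha_0(n)$ (which is only polylogarithmic) would improve the global first-moment/LLL count by that factor, and I do not see how to make it one. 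The paper's actual mechanism is different and is the key idea you are missing: take a minimal counterexample in $r$, split $V$ by the degree threshold $cr^2$ so that the low-degree part is $\lceil r/2\rceil$-colorable by the Local Lemma, and then exploit the dichotomy on $V_{big}$. Either $G[V_{big}]$ contains an independent set of size roughly $\ln^{1/b-\eps}r$ --- then coloring it monochromatically and deleting it removes at least $\tfrac12 cr^2\cdot\ln^{1/b-\eps}r$ edges (every deleted vertex has degree $\geq cr^2$, each edge counted at most twice), so the remaining graph violates the bound for $r-1$ and contradicts minimality; or $\alpha(G[V_{big}])$ is smaller than that, in which case the Ramsey upper bound $R(H,K_t)\leq t^{Bt^b}$ forces $|V_{big}|<r$, and $V_{big}$ can be rainbow-colored with $\lfloor r/2\rfloor$ colors (two vertices per class is automatically proper for 3-graphs). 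It is precisely the ``large independent set of \emph{high-degree} vertices kills $cr^2\cdot\alpha$ edges'' step that supplies the missing factor of $r$; without it your sketch does not reach $r^3\ln^{1/b-o(1)}r$. (A smaller quibble: your intermediate claim $\chi(G)\leq(1+o(1))n/\alpha_0(n)$ from iterated extraction only holds up to a constant factor unless you peel carefully, but that is not the essential problem.)
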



Let $K_4^{-}$ be the 3-graph with four vertices and three edges.
In~\cite{fox2019independent} it was stated that
\begin{equation}
    R(K_4^-, K_t) = t^{\Theta(t)}.
\end{equation}  
Then Theorem~\ref{thm:rExp} immediately yields
\begin{corollary}
    For some constant $C > 0$,
    \begin{equation}
        r^3 \ln^{1-o(1)} r
        \leq m_{K_4^-}(r) \leq C  r^3  \left(\frac{\ln r}{\ln \ln r}\right)^3.
    \end{equation}
\end{corollary}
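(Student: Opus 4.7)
The plan is to read off the corollary as a direct application of Theorem~\ref{thm:rExp} to the Ramsey bound $R(K_4^-, K_t) = t^{\Theta(t)}$ of Fox--He. Unpacking the $\Theta$-notation, there exist constants $A, A' > 0$ (and some threshold $t_0$, which by adjusting $A, A'$ downward and upward respectively we may take to be $1$) such that
\begin{equation}
t^{At} \;\leq\; R(K_4^-, K_t) \;\leq\; t^{A' t} \qquad \text{for all } t \geq 1.
\end{equation}
This is almost the hypothesis of Theorem~\ref{thm:rExp}, the only mismatch being that the upper bound has exponent linear in $t$ while the theorem requires exponent $t^b$ with $b>1$. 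The fix is the trivial inequality $A't \leq A' t^b$ for $b > 1$ and $t \geq 1$, which shows the upper hypothesis of Theorem~\ref{thm:rExp} is satisfied with $B = A'$ and \emph{any} $b > 1$.

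For the upper bound on $m_{K_4^-}(r)$, I would feed the lower Ramsey bound $t^{At} < R(K_4^-, K_t)$ into Theorem~\ref{thm:rExp}. The resulting
\begin{equation}
m_{K_4^-}(r) \;\leq\; (1+o(1))\,\frac{8}{A^3}\,r^3\left(\frac{\ln r}{\ln \ln r}\right)^3
\end{equation}
is exactly the desired bound once the constant $8/A^3$ (together with the $1+o(1)$ factor) is absorbed into $C$. This step uses the flexibility that the upper bound in Theorem~\ref{thm:rExp} depends only on $A$, not on $B$ or $b$.

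For the lower bound, I would apply Theorem~\ref{thm:rExp} to the upper Ramsey bound with $b$ arbitrarily close to $1$. For each fixed $b > 1$, the theorem gives $m_{K_4^-}(r) \geq r^3 \ln^{1/b - \eta_b(r)} r$ for some $\eta_b(r) \to 0$ as $r \to \infty$. Now, given any $\varepsilon > 0$, pick $b = b(\varepsilon) > 1$ close enough to $1$ so that $1/b > 1 - \varepsilon/2$, and then choose $r$ large enough that $\eta_b(r) < \varepsilon/2$. This yields $m_{K_4^-}(r) \geq r^3 \ln^{1-\varepsilon} r$, and since $\varepsilon > 0$ was arbitrary, this is precisely the statement $m_{K_4^-}(r) \geq r^3 \ln^{1 - o(1)} r$.

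I do not foresee any genuine obstacle: the corollary is essentially an unwrapping of the $\Theta(\cdot)$ notation in the Ramsey bound, combined with a diagonal limit $b \to 1^+$ in the lower bound of Theorem~\ref{thm:rExp}. The mildly delicate point — the only place where one must pay a little attention — is that the $o(1)$ in the exponent of the lower bound depends on $b$, so one cannot take $b \to 1$ before $r \to \infty$; the argument above handles this by choosing $b$ first as a function of $\varepsilon$ and only then sending $r \to \infty$.
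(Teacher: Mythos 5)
Your proposal is correct and is essentially the paper's argument: the paper simply states the corollary as an immediate consequence of Theorem~\ref{thm:rExp}, and your write-up supplies exactly the unpacking this requires (the observation that $t^{A't}\leq t^{A't^b}$ makes the hypothesis hold for every $b>1$, the upper bound depending only on the lower Ramsey bound, and the diagonal choice of $b=b(\eps)$ before $r\to\infty$ to turn $\ln^{1/b-o(1)}r$ into $\ln^{1-o(1)}r$). The one point you rightly flag as delicate — that the $o(1)$ depends on $b$ — is handled the same way the paper handles the analogous issue inside its proof of Theorem~\ref{thm:rExp}, by constructing a single vanishing function from the family of $\eps$-dependent thresholds.
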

The following bounds are from~\cite{conlon2010hypergraph}:
\begin{equation}
    2^{c_1 n\log n} \leq R (K_4,K_n) \leq 2^{c_2n^2\log n}
\end{equation}
for some constants $c_1, c_2 > 0$ and any sufficiently large $n$.
A direct application of Theorem~\ref{thm:rExp} gives
\begin{corollary}
    For some constant $C > 0$,
    \begin{equation}
        r^3 \ln^{1/2-o(1)} r
        \leq m_{K_4}(r) \leq C  r^3  \left(\frac{\ln r}{\ln \ln r}\right)^3.
    \end{equation}    
\end{corollary}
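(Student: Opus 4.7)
The plan is to deduce the corollary directly from Theorem~\ref{thm:rExp} once the cited Ramsey bounds from~\cite{conlon2010hypergraph} are rewritten in the base-$t$ form required by that theorem.

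First I would bring both inequalities into the shape $t^{At} < R(H,K_t) \leq t^{Bt^b}$ used in~\eqref{eq:ramsey_exp_b}. Using the identity $2^{c n \log n} = n^{(c\log 2)\,n}$, the lower bound $2^{c_1 n\log n} \leq R(K_4,K_n)$ becomes $R(K_4,K_n) \geq n^{An}$ with $A = c_1 \log 2$, and the upper bound $R(K_4,K_n) \leq 2^{c_2 n^2\log n}$ becomes $R(K_4,K_n) \leq n^{B n^2}$ with $B = c_2 \log 2$. In particular the relevant exponent parameter is $b = 2$. The bounds from~\cite{conlon2010hypergraph} are quoted only for sufficiently large $n$, while Theorem~\ref{thm:rExp} formally requires the inequalities for all $t \geq 1$; this is harmless because the corollary is asymptotic, so one can slightly shrink $A$ (resp.\ enlarge $B$) to absorb finitely many small values of $t$.

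Plugging $A$ and $b=2$ into the conclusion of Theorem~\ref{thm:rExp} immediately gives
\[
r^{3} \ln^{1/b - o(1)} r \;=\; r^{3} \ln^{1/2 - o(1)} r \;\leq\; m_{K_4}(r) \;\leq\; (1+o(1))\,\frac{8}{A^{3}}\, r^{3}\!\left(\frac{\ln r}{\ln\ln r}\right)^{3},
\]
and absorbing the constant prefactor on the right into a single $C>0$ yields exactly the statement of the corollary.

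There is no real obstacle: the whole argument is a notation change (from base $2$ to base $t$) followed by a black-box invocation of Theorem~\ref{thm:rExp}. The only point worth flagging is that the lower bound exponent $1/2$ comes entirely from reading off $b=2$ from the shape of $2^{c_2 n^{2}\log n}$, so any subsequent improvement of the upper Ramsey bound (say, to $2^{c n \operatorname{polylog} n}$) would automatically improve this corollary through the same Theorem.
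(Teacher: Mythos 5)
Your proposal is correct and matches the paper exactly: the paper gives no argument beyond calling the corollary ``a direct application of Theorem~\ref{thm:rExp}'' to the Conlon--Fox--Sudakov bounds, and your rewriting $2^{cn\log n}=n^{\Theta(n)}$ and $2^{c_2 n^2\log n}=n^{\Theta(n^2)}$ to read off $b=2$ (hence the exponent $1/2$ in the lower bound) is precisely the intended translation. The remark about absorbing finitely many small $t$ into the constants is a reasonable way to handle the ``sufficiently large $n$'' caveat that the paper leaves implicit.
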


Moreover, using the same idea of graph reduction by coloring some big independent set in the same color, as in proof of theorems of this section (but with much less effort), we obtain
\begin{proposition}\label{prop:e288}
    Let $H$ be the hypergraph on 7 vertices with the set of edges $\{\{1,2,3\},\{1,4,5\},\{1,6,7\},\{2,4,6\} \}$. Then
    \begin{equation}
        m_H(r) > Cr^4,
    \end{equation}
    where $C = e/288$.
\end{proposition}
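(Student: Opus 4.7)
The approach is to argue by contradiction: assume $G$ is $H$-free with $\chi(G) > r$ and $|E(G)| \le (e/288) r^4$, and derive a contradiction. The proof combines the symmetric Lov\'asz Local Lemma with the following structural consequence of $H$-freeness, which is the key combinatorial input: \emph{if the link $L(v)$ of a vertex $v$ contains a matching $\{\{a_j, b_j\} : 1 \le j \le k\}$, then $\{a_1, \ldots, a_k\}$ is an independent set in $G$}. Indeed, any triple $\{a_{i_1}, a_{i_2}, a_{i_3}\}$ is a transversal of the 3-element sub-sunflower consisting of the edges $\{v, a_{i_j}, b_{i_j}\}$ for $j = 1, 2, 3$, and by $H$-freeness it cannot be an edge of $G$.

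Applying the symmetric LLL to a uniform random $r$-coloring of $V(G)$: each edge is monochromatic with probability $1/r^2$ and the corresponding event depends on at most $3(\Delta - 1)$ others, so the condition $ep(d+1) \le 1$ would produce a proper $r$-coloring unless $\Delta > r^2/(3e)$. Pick a vertex $v_0$ of maximum degree; a greedy matching in $L(v_0)$ gives $\mu(L(v_0)) \ge \Delta/(2(n-1))$, so by the structural lemma $\alpha(G) \ge \Delta/(2n) > r^2/(6en)$.

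I would then iterate: color this independent set $I_0$ with color $1$, pass to the still-$H$-free residual $G_1 := G[V \setminus I_0]$ (whose chromatic number must exceed $r-1$ or we would get a proper $r$-coloring of $G$), and repeat. After $r$ rounds this produces independent sets $I_0, I_1, \ldots, I_{r-1}$ with $|I_i| \ge (r-i)^2/(6e n_i)$, where $n_i := |V(G_i)|$. Since $\chi(G) > r$, they cannot cover $V(G)$: $\sum_{i=0}^{r-1}|I_i| < n$. Using the crude bound $n_i \le n$ and $\sum_{i=0}^{r-1}(r-i)^2 \ge r^3/3$, this already forces $n > r^{3/2}/\sqrt{18e}$. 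The final step combines this lower bound on $n$ with the standard random-alteration bound $\alpha(G) \ge (2/\sqrt{27})\, n^{3/2}/\sqrt{m}$ and a careful accounting of edges discarded across all $r$ iterations to arrive at $m > (e/288)\, r^4$, contradicting the hypothesis.

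The main obstacle is the arithmetic in this last step: the constant $e/288$ emerges as a product of the LLL factor $1/(3e)$, the greedy matching factor $1/2$, the standard-alteration factor $4/27$, and the arithmetic factor $r^3/3$ from the iteration. Tracking the $n_i$'s and the precise edge losses at each round---rather than using the crude bound $n_i \le n$---is what produces the exact constant $e/288$ and constitutes the main technical content of the proof.
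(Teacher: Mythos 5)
Your structural lemma (a matching in the link of $v$ yields an independent transversal, because a triple of matched endpoints together with $v$ and the three link edges would form a copy of $H$) is exactly the right use of $H$-freeness, and the LLL threshold $\Delta > r^2/(3e)$ is the right trigger. But the way you extract and exploit the independent set leaves a genuine gap in the last step. The set $I_i$ you produce lives in the link of a \emph{single} maximum-degree vertex and has size only $\Delta_i/(2n_i)$, with no control on the degrees of its members; consequently its removal is only guaranteed to destroy about $|I_i|$ edges (one per matching edge through $v_i$), which sums over all rounds to $O(r^3/n)$ --- nowhere near the $\Omega(r^3)$ per round needed to beat the telescoping budget $Cr^4 - C(r-1)^4 \approx 4Cr^3$. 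The auxiliary bound $n \gtrsim r^{3/2}$ combined with the alteration bound $\alpha \gtrsim n^{3/2}/\sqrt{m}$ cannot close this: iterating that bound only gives $\chi = O(\sqrt{m})$, i.e.\ $m = \Omega(r^2)$, and even balancing it against $\alpha \geq \Delta/(2n)$ yields an exponent well below $4$. (The proposed factorization of $e/288$ also does not multiply out: $\tfrac{1}{3e}\cdot\tfrac12\cdot\tfrac{4}{27}\cdot\tfrac13 \neq e/288$.)

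The missing idea is to find a large independent set \emph{all of whose vertices have degree $\Omega(r^2)$}, so that deleting one color class deletes $\Omega(r^3)$ edges. The paper does this by splitting $V$ into $V_{small}$ (degree at most $r^2/(12e)$, which is $\lceil r/2\rceil$-colorable by the LLL) and $V_{big}$, and then running Pluh\'ar's greedy coloring on $G[V_{big}]$ with $\lfloor r/2\rfloor$ colors. If it fails, some vertex $v$ is last in edges $f_1,\dots,f_{r'}$ whose remainders $f_i\setminus\{v\}$ carry distinct colors and are therefore pairwise disjoint --- a sunflower with core $\{v\}$ and $r'=\lfloor r/2\rfloor$ petals. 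Your own structural lemma then gives an independent transversal $I$ of size $r'$, but now every vertex of $I$ lies in $V_{big}$ and has degree greater than $r^2/(12e)$, so $I$ meets at least $\tfrac12\cdot r'\cdot r^2/(12e) = \Omega(r^3)$ edges; removing it contradicts the minimality of $r$ once $4C$ is below that constant. So you had the right combinatorial engine but applied it at a single vertex of maximum degree rather than at a vertex where the greedy coloring fails inside the high-degree part, which is what makes the $r^4$ bound (and the constant) come out.
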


\subsubsection{Relation with Tur{\'a}n numbers}

A \textit{Tur\'an number} $\ex(n,H)$ is the maximum number of edges in an $H$-free 3-graph with $n$ vertices. As with Ramsey numbers, we can obtain bounds for $m_H(r)$ from bounds on Tur\'an numbers.

\begin{theorem}\label{thm:turan}
    Let $H$ be a 3-graph and let $C_1 > 0$ be a constant such that for all $n \geq 1$
    \begin{equation}\label{eq:exnH}
        \ex(n,H) \leq C_1n^b.
    \end{equation}
    Then for any constant $C_2 > 0$ there exists some $R = R(C_2)$ such that for any $r > R$
    \begin{equation}\label{eq:r4log}
        m_H(r) > \frac{C_2r^\frac{2b}{b-1}}{(\ln \ln r)^{1+\frac{2b}{b-1}}}.
    \end{equation}
\end{theorem}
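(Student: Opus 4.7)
The plan is to set up a recursion for $m_H(r)$ via a random coloring argument, then iterate it.

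Take an $H$-free 3-graph $G$ with $\chi(G) > r$ and $m = |E(G)|$ edges, and pick a parameter $r' \in \{1, \dots, r-1\}$. Color $V(G)$ uniformly at random with $r'$ colors; the probability that a given edge is monochromatic is $1/(r')^2$, so some coloring has at most $m/(r')^2$ monochromatic edges. Select one vertex from each such edge to form a set $R$ with $|R| \le m/(r')^2$; then $V(G) \setminus R$ is properly $r'$-colored. If $G[R]$ admitted a proper coloring with $r - r'$ additional colors drawn from a disjoint palette, the two colorings could be concatenated to a proper $r$-coloring of $G$ --- any edge crossing $R$ and $V(G) \setminus R$ contains vertices from both palettes and is automatically non-monochromatic --- contradicting $\chi(G) > r$. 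Therefore $\chi(G[R]) > r - r'$, so $|E(G[R])| \ge m_H(r - r')$ by the definition of $m_H$. Since $G[R]$ is $H$-free on at most $m/(r')^2$ vertices, \eqref{eq:exnH} gives $|E(G[R])| \le C_1 (m/(r')^2)^b$, and combining these inequalities yields
\[
m_H(r) \ge (r')^2 \bigl(m_H(r-r')/C_1\bigr)^{1/b}.
\]

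Setting $r' = \lfloor r/2 \rfloor$ and iterating until the argument drops just below a fixed threshold (after $k \sim \log_2 r$ steps), the exponents of $r$ contributed at each level telescope as a geometric series $\sum_{i \ge 0} 2/b^i = 2b/(b-1)$, while the tail factor $m_H(r/2^k)^{1/b^k}$ tends to $1$ because $1/b^k$ decays super-polynomially. This yields $m_H(r) \ge c\, r^{2b/(b-1)}$ for some constant $c > 0$ depending only on $H$, $C_1$, and $b$. The theorem then follows immediately: since $(\ln\ln r)^{1+2b/(b-1)} \to \infty$, for any $C_2 > 0$ one has $c \ge C_2/(\ln\ln r)^{1+2b/(b-1)}$ for all $r$ sufficiently large, producing the claimed inequality.

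The main delicate step is the passage from the random coloring to the chromatic bound on $G[R]$: it relies on the ability to combine two colorings drawn from disjoint palettes without creating any monochromatic edge across $R$ and $V(G) \setminus R$, which holds because in a hypergraph a monochromatic edge requires \emph{all} its vertices to share one color. The exponent $2b/(b-1)$ is the unique fixed point of the recursion, reflecting the balance between the quadratic gain $(r')^2$ per step and the $b$-th root taken inside $m_H$; the choice $r' = r/2$ is convenient but non-essential, as any constant fraction of $r$ (optimally $\xi = (b-1)/b$) produces the same exponent.
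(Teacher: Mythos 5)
Your proposal is correct, but it is a genuinely different argument from the one in the paper, and it actually proves something stronger. The paper works with a fixed counterexample $G$ and peels off nested sets $V_0 \supset V_1 \supset \cdots$ of high-degree vertices (threshold $\approx \frac{1}{C_3}(r/\ln\ln r)^2$), colors each shell $V_i \setminus V_{i+1}$ with $r/\log_b\ln r$ colors via the local lemma, and uses the Tur\'an bound to show the shells shrink doubly exponentially so that $\log_b\ln r$ shells suffice; splitting the color budget $r$ into $\log_b\ln r$ equal parts is exactly what costs the $(\ln\ln r)^{1+\frac{2b}{b-1}}$ factor in~\eqref{eq:r4log}. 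You instead derive the functional inequality $m_H(r) \ge \lfloor r/2\rfloor^2\bigl(m_H(\lceil r/2\rceil)/C_1\bigr)^{1/b}$ by a first-moment random coloring (the key observation that $\chi(G[R]) > r - r'$, and hence $|E(G[R])| \ge m_H(r-r')$, is sound, since a monochromatic edge of a 3-graph needs all three vertices in one class and the two palettes are disjoint), and iterating it $\sim\log_2 r$ times gives $m_H(r) \ge c\,r^{2b/(b-1)}$ outright --- no $\ln\ln$ loss at all, which implies the theorem since $(\ln\ln r)^{1+\frac{2b}{b-1}}\to\infty$. Two small points you should tighten when writing this up: (i) the accumulated constants $\prod_i 4^{-i/b^{i-1}}$ and $C_1^{-\sum_i b^{-i}}$ converge because $b>1$, and this is where your constant $c$ comes from; (ii) your phrase ``$1/b^k$ decays super-polynomially'' is inaccurate --- with $k\sim\log_2 r$ one has $b^{-k} = r^{-\log_2 b}$, which is only polynomial decay --- but the conclusion stands because what is actually needed is $b^{-k}\ln r \to 0$ (so that $r^{\frac{2b}{b-1}b^{-k}}\to 1$) and $m_H(O(1))^{b^{-k}}\to 1$, both of which hold. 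Neither point is a gap; your argument is a cleaner and quantitatively sharper route to the theorem than the paper's.
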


The following proposition is folklore.

\begin{proposition}\label{prop:edges_ordering}
Let $H$ be a 3-graph, such that $t=|V(H)|=|E(H)|+2$. Suppose that there is an ordering of edges  $h_1,\dots,h_{t-2}$ such that $h_i=\{a_i,b_i,c_i\}$ for each $i>1$, where $\{a_i,b_i\}\subset h_{j(i)}$ for some ${j(i)}<i$ and $c_i\not\in h_j$ for $j<i$. Then 
\begin{equation}    
    ex(n,H)\leqslant (t-3)\binom{n}{2}.
\end{equation}
\end{proposition}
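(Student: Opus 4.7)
The natural plan is a standard \emph{cleanup plus greedy embedding} argument. Suppose toward a contradiction that $G$ is an $H$-free $3$-graph on $n$ vertices with $|E(G)|>(t-3)\binom{n}{2}$; the goal is to embed $H$ into $G$.

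\emph{Cleanup.} Iteratively run the following process: as long as some pair $\{u,v\}$ has codegree $d(u,v)$ in the current graph satisfying $1\le d(u,v)\le t-3$, delete every edge through $\{u,v\}$. After such a deletion step $d(u,v)=0$, so the pair can never trigger another deletion. Hence each of the at most $\binom{n}{2}$ pairs is responsible for at most $t-3$ removed edges, and the total number of deletions is at most $(t-3)\binom{n}{2}$. By the edge-count assumption the resulting $3$-graph $G^{*}$ is nonempty, and by construction every pair in $G^{*}$ has codegree either $0$ or at least $t-2$.

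\emph{Greedy embedding.} Map $h_1$ to any edge of $G^{*}$. Inductively embed $h_2,\dots,h_{t-2}$ in the prescribed order. At step $i\ge 2$ the pair $\{a_i,b_i\}$ is contained in the already-embedded edge $\varphi(h_{j(i)})\in G^{*}$, so its codegree in $G^{*}$ is positive and therefore at least $t-2$. Right before step $i$ exactly $3+(i-2)=i+1$ vertices are occupied; discarding $\varphi(a_i)$ and $\varphi(b_i)$ from this set leaves at most $i-1\le t-3$ forbidden candidates for $\varphi(c_i)$. Since $t-2>t-3\ge i-1$, the neighborhood of $\{\varphi(a_i),\varphi(b_i)\}$ in $G^{*}$ contains some vertex outside the forbidden set; choose it as $\varphi(c_i)$. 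After step $t-2$ we obtain a copy of $H$ inside $G^{*}\subseteq G$, contradicting $H$-freeness.

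\emph{Main difficulty.} There is no real obstacle; the subtle point is simply the bookkeeping in the cleanup step. The bound $(t-3)\binom{n}{2}$ on deletions relies on the observation that after processing $\{u,v\}$ its codegree becomes $0$ and stays there, so each pair is processed at most once. The rest of the argument is then forced: the invariant that every pair appearing inside an embedded edge has codegree $\ge t-2$ in $G^{*}$ is preserved automatically, because the embedded edges lie in $G^{*}$ and $G^{*}$ admits only the codegrees $0$ and $\ge t-2$.
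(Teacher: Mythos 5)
Your proposal is correct and follows essentially the same route as the paper: a cleanup phase that removes at most $(t-3)\binom{n}{2}$ edges to make every codegree either $0$ or at least $t-2$, followed by the greedy embedding of $h_1,\dots,h_{t-2}$ in order. The only (immaterial) difference is that you delete all edges through a low-codegree pair in one batch while the paper deletes them one at a time; the charging argument and the resulting bound are identical.
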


Note that such a graph $H$ is balanced with balance $1$, so Theorems~\ref{thm:turan} and~\ref{thm:bohman2010-4} give
\begin{corollary}
    Suppose that 3-graph $H$ satisfies the conditions of Proposition~\ref{prop:edges_ordering}. Then for some positive constants $C_1, C_2 > 0$ and for any sufficiently large $r$ 
    \begin{equation}
        C_1 \frac{r^4}{(\ln \ln r)^5} < m_H(r) < C_2 r^4 (\ln r)^2.
    \end{equation}
\end{corollary}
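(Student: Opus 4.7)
The two inequalities look like direct specializations of Theorems~\ref{thm:turan} and~\ref{thm:bohman2010-4} to the values $b=2$ and $\rho(H)=1$ respectively, so my plan is to check these two parameters and then plug in.

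For the lower bound, Proposition~\ref{prop:edges_ordering} immediately gives $\ex(n,H)\le (t-3)\binom{n}{2}\le C_1 n^2$, which is the hypothesis \eqref{eq:exnH} of Theorem~\ref{thm:turan} with $b=2$. Then $\frac{2b}{b-1}=4$ and $1+\frac{2b}{b-1}=5$, and choosing any fixed value of the $C_2$ in Theorem~\ref{thm:turan} yields the claimed lower bound $m_H(r)>C_1 r^4/(\ln\ln r)^5$ for all sufficiently large $r$.

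For the upper bound I must verify that $H$ is balanced with $\rho(H)=1$. On $H$ itself, $v=t$ and $e=t-2$, so $(e-1)/(v-3)=1$. To show this is the maximum, I would prove the following claim by induction on $e'$: every nonempty subgraph $H'\subseteq H$ with $v'$ vertices and $e'$ edges satisfies $e'\le v'-2$. The base case $e'=1$ is immediate since a single edge spans three vertices. For the inductive step, let $h_i$ be the edge of $H'$ with the largest index in the ordering $h_1,\dots,h_{t-2}$. By the defining property of the ordering, $c_i\in h_i$ does not belong to any $h_j$ with $j<i$; in particular $c_i$ appears in no other edge of $H'$. Deleting $h_i$ therefore removes one edge and at least the one vertex $c_i$, and the inductive hypothesis applied to the remaining subgraph gives $v'-1\ge (e'-1)+2$, i.e.\ $v'\ge e'+2$, as desired. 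Thus $\rho(H)=1>\frac{1}{2}$ and $H$ is balanced.

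Having established $\rho(H)=1$, Theorem~\ref{thm:bohman2010-4} applies with $(3\rho-1)/(2\rho-1)=2$, producing $m_H(r)=O((r^2\log r)^2)=O(r^4\log^2 r)$, which matches the claimed upper bound. The main obstacle is the inductive verification of balance; once that is in place, the rest is substitution into the two cited theorems.
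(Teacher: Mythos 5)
Your proposal is correct and follows exactly the paper's route: the paper derives this corollary in one line by noting that Proposition~\ref{prop:edges_ordering} gives $\ex(n,H)=O(n^2)$ (so $b=2$ in Theorem~\ref{thm:turan}) and that $H$ is balanced with $\rho(H)=1$ (so Theorem~\ref{thm:bohman2010-4} gives the upper bound). Your inductive verification that every subgraph satisfies $e'\le v'-2$ is a correct filling-in of the balance claim that the paper leaves unjustified.
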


\subsection{Tools}
\label{subsec:tools}

\subsubsection{Applications of LLL}

We are going to use the following classical technique.
\begin{lemma}[Symmetric Lov\'asz local lemma]\label{lm:LLL}
    Let $A_1,\ldots,A_n$ be events with probabilities
    $\mathbf{P}(A_i) \leq p$ for $1 \leq i \leq n$.
    Suppose that each event $A_i$ is mutually independent of all the other events except at most $d$ of them. 
    If $ep(d+1) < 1$, then $\mathbf{P}(\wedge_{i=1}^n \overline{A}_i) > 0$.
\end{lemma}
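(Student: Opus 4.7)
The plan is to follow the classical inductive argument of Erd\H{o}s and Lov\'asz. The key is to establish the stronger claim that for every $i \in \{1,\dots,n\}$ and every subset $S \subseteq \{1,\dots,n\} \setminus \{i\}$,
$$
\mathbf{P}\bigl(A_i \mid \wedge_{j \in S} \overline{A}_j\bigr) \leq \frac{1}{d+1},
$$
proceeding by induction on $|S|$. The base case $S = \emptyset$ is immediate, since by hypothesis $\mathbf{P}(A_i) \leq p < 1/(e(d+1)) < 1/(d+1)$.

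For the inductive step I would split $S = S_1 \sqcup S_2$, where $S_1$ collects those $j \in S$ such that $A_j$ lies in the dependency neighborhood of $A_i$ (so $|S_1| \leq d$), and $S_2 = S \setminus S_1$. Writing the target conditional as the quotient
$$
\mathbf{P}\bigl(A_i \mid \wedge_{j \in S} \overline{A}_j\bigr)
=\frac{\mathbf{P}\bigl(A_i \wedge \bigwedge_{j \in S_1} \overline{A}_j \mid \bigwedge_{k \in S_2} \overline{A}_k\bigr)}
      {\mathbf{P}\bigl(\bigwedge_{j \in S_1} \overline{A}_j \mid \bigwedge_{k \in S_2} \overline{A}_k\bigr)},
$$
I would bound the numerator by $\mathbf{P}(A_i \mid \wedge_{k \in S_2} \overline{A}_k) = \mathbf{P}(A_i) \leq p$, using mutual independence of $A_i$ from the family indexed by $S_2$. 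The denominator I would lower bound by expanding via the chain rule and applying the induction hypothesis to each factor (the conditioning sets appearing there all have size strictly less than $|S|$), obtaining at least $(1 - 1/(d+1))^{|S_1|} \geq (1 - 1/(d+1))^d$. Combining these estimates yields
$$
\mathbf{P}\bigl(A_i \mid \wedge_{j \in S} \overline{A}_j\bigr)
\leq \frac{p}{(1-1/(d+1))^d}
= p\bigl(1 + 1/d\bigr)^d
< ep,
$$
and the hypothesis $ep(d+1) < 1$ closes the induction.

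Once the inductive claim is in place, the final conclusion follows from a second application of the chain rule,
$$
\mathbf{P}\bigl(\wedge_{i=1}^n \overline{A}_i\bigr)
= \prod_{i=1}^n \mathbf{P}\bigl(\overline{A}_i \mid \wedge_{j<i} \overline{A}_j\bigr)
\geq \Bigl(1 - \tfrac{1}{d+1}\Bigr)^n > 0.
$$
The one delicate choice is the split of $S$ by the dependency of $A_i$: it is precisely this partition that lets mutual independence collapse the numerator while the induction hypothesis controls the denominator. Beyond this, the argument is purely bookkeeping, and I do not anticipate any real obstacle.
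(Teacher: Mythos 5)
Your proposal is the standard Erd\H{o}s--Lov\'asz inductive argument and it is correct: the induction on $|S|$ with the split of $S$ into the dependency neighborhood $S_1$ and the independent part $S_2$, the bound $p(1+1/d)^d < ep \le 1/(d+1)$, and the final chain-rule product are all exactly the classical proof. The paper does not prove this lemma at all --- it quotes it as a known classical tool --- so there is nothing to compare against; your write-up would serve as a correct self-contained proof. The only point worth tightening is the implicit assumption that all the conditioning events have positive probability (so that the conditional probabilities in the chain rule are defined); this is guaranteed by the same induction, since $\mathbf{P}\bigl(\wedge_{j\in S}\overline{A}_j\bigr) \ge \bigl(1-\tfrac{1}{d+1}\bigr)^{|S|} > 0$, but it deserves a sentence.
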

The following lemma is a folklore corollary of Lemma~\ref{lm:LLL}.
\begin{lemma}\label{lm:3graph-coloring}
    Let $G$ be a 3-graph. Let $r>0$ be some integer. Suppose that for every $v\in V(G)$
    \begin{equation}
        \deg v \leq \Delta  = \frac{r^2}{3e}.
    \end{equation}
    Then $G$ can be properly colored with 
    $r$
    colors.
\end{lemma}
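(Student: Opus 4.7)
The plan is to apply the symmetric Lov\'asz local lemma (Lemma~\ref{lm:LLL}) to a uniformly random $r$-coloring of $V(G)$.

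First, I would color each vertex of $G$ independently and uniformly with one of the $r$ colors and, for each edge $g\in E(G)$, introduce the bad event $A_g$ that $g$ is monochromatic. Since $|g|=3$, one has $\mathbf{P}(A_g)=1/r^2$.

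Next, I would identify the dependency graph: $A_g$ is determined only by the colors of the three vertices of $g$, so it is mutually independent of the family $\{A_{g'} : g'\cap g=\emptyset\}$. Each vertex of $g$ lies in at most $\Delta$ edges of $G$, so the number of edges $g'\neq g$ meeting $g$ is at most $3(\Delta-1)$, which I would take as $d$ in the local lemma.

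Finally, I would verify the hypothesis $ep(d+1)<1$ with $p=1/r^2$ and $d+1\leq 3\Delta-2$; substituting $\Delta=r^2/(3e)$ gives an upper bound of $1-2e/r^2<1$, so Lemma~\ref{lm:LLL} produces a positive-probability outcome avoiding every $A_g$, i.e.\ a proper $r$-coloring. The main point to watch is that the hypothesis $\Delta=r^2/(3e)$ is calibrated to make the LLL inequality essentially tight, so the dependency count must be handled carefully: naively bounding the number of neighbouring bad events by $3\Delta$ rather than $3(\Delta-1)$ would already push $ep(d+1)$ above $1$ and break the argument. Beyond this bookkeeping, it is a textbook random-coloring plus LLL proof.
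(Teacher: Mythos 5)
Your proposal is correct and follows essentially the same route as the paper: a uniform random $r$-coloring, bad events $A_g$ with $p=1/r^2$, the dependency bound $3(\Delta-1)$ coming from the three vertices of $g$, and the LLL condition $ep(d+1)<1$. The careful count $d+1\leq 3\Delta-2<3\Delta$ that you flag as essential is exactly the point the paper also relies on to make the inequality strict.
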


\begin{proof}
    Let us assign to each vertex one of $r$ colors at random uniformly and independently.
    For each $f \in E$ consider an event $A_f = \text{``$f\in E$ is monocromatic''}$. The probability of such event is $1/r^2 =: p$. Clearly each event $A_f$ is independent of $\{A_{g} \colon f \cap g = \emptyset\}$. There are at most $3(\Delta-1) + 1 < 3\Delta$ events (including $A_f$ itself) that are not in this family, so
    \begin{equation}
       ep(d+1) <  \frac{3e\Delta}{r^2} = 1,
    \end{equation}
    and, by Lemma~\ref{lm:LLL}, $G$ has some coloring in $r$ colors with no monochromatic edges.
\end{proof}

Lemma~\ref{lm:3graph-coloring} allows us to care only about vertices with big degree when we are coloring some graph in $r$ colors. Let us make this statement more precise. 
\begin{definition}
    Denote $c = (12e)^{-1}$.
    Consider an integer $r$ and a 3-graph $G$.
    Define
    \begin{equation}
        V_{small} = \{v \in V(G) \mid \deg v \leq cr^2\},
    \end{equation}
    \begin{equation}
        V_{big} = V(G) \setminus V_{small}.
    \end{equation}
\end{definition}
By a direct application of Lemma~\ref{lm:3graph-coloring} we obtain
\begin{lemma}\label{lm:vsmall}
    Let $r$ be an integer and $G$ be a 3-graph.
    Then $G[V_{small}]$ is $\lceil r/2\rceil$-colorable.
\end{lemma}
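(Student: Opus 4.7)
The plan is to apply Lemma~\ref{lm:3graph-coloring} directly to the induced subgraph $G[V_{small}]$, but with the number of colors set to $\lceil r/2\rceil$ instead of $r$. The whole point of the threshold $c=(12e)^{-1}$ in the definition of $V_{small}$ is precisely that $cr^2$ matches the maximum-degree bound tolerated by Lemma~\ref{lm:3graph-coloring} when one halves the palette.

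Concretely, I would first observe that for every $v\in V_{small}$, the degree of $v$ in the induced subgraph $G[V_{small}]$ is at most its degree in $G$, which by definition of $V_{small}$ satisfies
\begin{equation}
    \deg_{G[V_{small}]} v \;\leq\; \deg_G v \;\leq\; cr^2 \;=\; \frac{r^2}{12e}.
\end{equation}
Next, I would check that this bound is admissible for the hypothesis of Lemma~\ref{lm:3graph-coloring} applied with the integer $\lceil r/2\rceil$ in place of $r$. The required threshold becomes $\lceil r/2\rceil^2/(3e)$, and since $\lceil r/2\rceil \geq r/2$, we get
\begin{equation}
    \frac{\lceil r/2\rceil^2}{3e} \;\geq\; \frac{(r/2)^2}{3e} \;=\; \frac{r^2}{12e},
\end{equation}
so the degree bound in $G[V_{small}]$ is indeed at most $\lceil r/2\rceil^2/(3e)$.

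Finally, Lemma~\ref{lm:3graph-coloring} then produces a proper coloring of $G[V_{small}]$ with $\lceil r/2\rceil$ colors, which is exactly the conclusion. There is no real obstacle here: the constant $c=(12e)^{-1}$ was tailored to make this one-line reduction work, and the only thing to verify is the elementary inequality $\lceil r/2\rceil^2 \geq r^2/4$.
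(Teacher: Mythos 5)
Your proof is correct and is exactly the "direct application of Lemma~\ref{lm:3graph-coloring}" that the paper invokes without further detail: degrees only drop when passing to the induced subgraph, and $cr^2 = r^2/(12e) \leq \lceil r/2\rceil^2/(3e)$, so the local lemma bound applies with palette size $\lceil r/2\rceil$. Nothing to add.
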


\subsubsection{Greedy coloring}

The following approach was introduced by Pluh{\'a}r~\cite{Pl}. Let us consider an arbitrary linear order $\pi$ on the vertex set of a given 3-graph $G$.
Now one can go through vertices in order $\pi$ and color each considered vertex into the minimal color which does not create a monochromatic edge at this moment.
By the construction, the result is a proper coloring $f$ in some number of colors.

Let an \textit{$r$-chain} be a set of edges $g_1,\dots, g_r$ such that $|g_i \cap g_j| = 1$ if $|i-j| = 1$, and 0 otherwise.
We say that an $r$-chain is \textit{ordered in $\pi$} if $i < j$ implies that every vertex of $g_i$ is no bigger that every vertex of $g_j$ with respect to $\pi$.

Pluh{\'a}r noted that the number of colors in $f$ is the maximal size of an ordered chain plus one.
In particular, if $f$ uses more than $r$ colors, then $G$ contains an $r$-chain (in fact, a lot of $r$-chains).
Then Gy{\'a}rf{\'a}s and Lehel~\cite{gyarfas2011trees} extended the latter statement as follows.
If a greedy coloring of a 3-graph $G$ uses more than $r$ colors, then $G$ contains a copy of every hypertree (connected hyperforest) with $r$ edges.

\subsection{Correctness} 
\label{subsec:corr}
We have claimed earlier that the function $m_H(r)$ is properly defined for all $H$ except hyperforests.
Here we provide a brief proof of this statement. Indeed, if $H$ is a hyperforest, then, as was discussed in the previous subsection, any $H$-free hypergraph can be properly colored in $|E(H)|$ colors using greedy coloring method, so $m_H(r)$ is not defined for $r > |E(H)|$.

Otherwise, $H$ contains either a pair of edges intersecting by two vertices, or a \textit{loose cycle} $C_l$ for some $l \geq 3$, i.e. a 3-graph on $2l$ vertices with the set of edges $\{\{1, 2, 3\}, \{3, 4, 5\}, \dots, \{2l - 1, 2l, 1\}\}$.
In the first case any simple 3-graph is $H$-free, so, in view of Theorem~\ref{thm:kostochka-bohman}, $m_H(r) = O(r^4 \log^2 r)$. 
For a graph with a loose cycle $C_l$ one may count the balance of the cycle as follows
\[
\rho(C_l) = \frac{l - 1}{2l - 3} = \frac{1}{2} + \frac{1}{4l - 6}.
\]
The maximum in the definition of the balance is achieved on the whole cycle, so $C_l$ is balanced with the balance greater than $\frac{1}{2}$.
Thus one can apply Theorem~\ref{thm:bohman2010-4}
to obtain the inequality
\[
m_H (r) \leq m_{C_l} (r) = O \left ( (r^2 \log r)^{\frac{3\rho-1}{2\rho-1}} \right ) = O(r^{2l}\log^l r).
\]
\section{Proofs}
\label{proofs}

%
%
\subsection{Proof of Theorem~\ref{thm:rPoly}}

\subsubsection{The upper bound}
    The lower bound on $R(H, K_t)$ states that for each $t\geq 1$ there is an $H$-free 3-graph $G$ with $|V(G)| = At^{a}$ and the independence number $\alpha(G) < t$. The chromatic number of $G$ satisfies
    \begin{equation}
        \chi(G) \geq
        \frac{|V(G)|}{\alpha(G)}
        \geq At^{a-1}.
    \end{equation}
    Now for a given $r$ we choose integer $t$ such that $A(t-1)^{a-1} \leq r < At^{a-1}$. Then
    \begin{equation}
        |E(G)| \leq \frac{|V(G)|^{3}}6 = \frac{A^3t^{3a}}{6} 
        = \frac{A^{3-\frac{3a}{a-1}}}6 
        (At^{a-1})^{\frac{3a}{a-1}}
        <
        \frac{A^{3-\frac{3a}{a-1}}}6 
        \biggl(A\Bigl(\bigl(1+o(1)\bigr)(t-1)\Bigr)^{a-1}\biggr)^{\frac{3a}{a-1}}
        \leq
        \frac{(1+o(1))^{3a}}{6A^{\frac 3{a-1}}} \cdot r^{\frac{3a}{a-1}}.
    \end{equation}

\subsubsection{The lower bound}
    Suppose that $G$ is an $H$-free 3-graph with $|E(G)| < B'r^{\frac b{b-1}+2}$ that is not $r$-colorable. Let such $r$ be minimal possible. 
    From Lemma~\ref{lm:vsmall}
    it follows that $G[V_{big}]$ cannot be colored with no more than $r/2$ colors.
    Then 
    \begin{equation}\label{eq:egv_big_is_big}
        |E(G[V_{big}])| \geq \frac{B'}{2^{\frac b{b-1}+2}} \cdot r^{\frac b{b-1}+2},
    \end{equation}
    
    Let $V_{big} = V_0 \cup V_1 \cup V_2 \cup \dots$, where
    \begin{equation}
        V_k = \{ v \in V(G) \mid 2^k cr^2 \leq \deg v < 2^{k+1}cr^2\},
    \end{equation}
    and $c = (12e)^{-1}$ is from the definition of $V_{big}$.
    Suppose there is some $k$ for which
    \begin{equation}\label{eq:alphaGVk}
        \alpha(G[V_k]) \geq \frac{B'}{2^{k-1}cr^2}\bigl(r^{\frac b{b-1} + 2} - (r-1)^{\frac b{b-1}+2}\bigr).
    \end{equation}
    Then we can paint the vertices from a corresponding independent set in one color. Now if we remove colored vertices from $G$, we get a 3-graph with less than 
    \begin{equation}
        B'r^{\frac b{b-1}+2} - \frac 12 \cdot 2^{k}cr^2\cdot \alpha(G[V_k]) = B'(r-1)^{\frac b{b-1}+2}
    \end{equation}
    edges that is not $(r-1)$-colorable. That is in contradiction to the minimality of $r$.
    So we can assume that for every $k$ the contrary of~\eqref{eq:alphaGVk} holds, so, by upper bound from~\eqref{eq:ramsey_bounds}, we have
    \begin{equation}
        |V_k| < B\left(
            \frac{B'}{2^{k-1}cr^2}\bigl(r^{\frac b{b-1} + 2} - (r-1)^{\frac b{b-1}+2}\bigr)
                \right)^b.
    \end{equation}
    Using the fact that $x^q - (x-1)^q \leq qx^{q-1}$ for $x,q \geq 1$, we get
    \begin{equation}
        |V_k| < B\left(
            \frac{B'}{2^{k-1}cr^2}\cdot \biggl(\frac{b}{b-1}+2\biggr) \cdot r^{\frac b{b-1} + 1}
                \right)^b
                =
                C \left(\frac{r^{\frac b{b-1}+1}}{2^{k-1}r^2}\right)^b
                =
                \frac{1}{2^{b(k-1)}}\cdot C r^{\frac{b}{b-1}},
    \end{equation}
    where
    \begin{equation}
        C = \frac{B{B'}^{b}\bigl(\frac b{b-1}+2\bigr)^b}{c^b}.
    \end{equation}
    Now let $g_k$ be the number of edges that pass through vertices of $V_k$. We have
    \begin{equation}
        |E(G[V_{big}])| < \sum_{k \geq 0} g_k
        < \sum_{k \geq 0} 2^{k+1}cr^2 \cdot 
                \frac{1}{2^{b(k-1)}}\cdot C r^{\frac{b}{b-1}}
            =
            cC2^{b+1}r^{\frac b{b-1}+2} \sum_{k\geq 0} 2^{(1-b)k}
            <
            \frac{cC2^{b+1}}{1-2^{1-b}} \cdot r^{\frac b{b-1}+2}.
    \end{equation}
    Combining this with~\eqref{eq:egv_big_is_big} and substituting $C$ and $c$ we get
    \begin{equation}
        \frac{B'}{2^{\frac b{b-1}+2}} < \frac{(1/12e)B{B'}^b\bigl(\frac b{b-1}+2\bigr)^b 2^{b+1}}{(1/12e)^b(1-2^{1-b})} 
    \end{equation}
    or
    \begin{equation}
        {B'}^{b-1} > \frac{(1-2^{1-b})}{2^{\frac b{b-1}+b+3} (12e)^{b-1}B\bigl(\frac b{b-1}+2\bigr)^b},
    \end{equation}
    which is a contradiction.
\subsection{Proof of Proposition~\ref{prop:good_upper_bound}}

A direct application of Theorem~\ref{thm:rPoly} gives the upper bound $r^{9+o(1)}$. 
To obtain a better upper bound, let us count the number of edges in the example constructed in~\cite{kostochka2013hypergraph}.
The construction is as follows: let $G_q$ be a generalized quadrangle, i.e. a $(q + 1)$-regular $(q + 1)$-uniform simple $C_3$-free hypergraph on $q^3 + q^2 + q + 1$ vertices.
It is known that such hypergraph exists whenever $q$ is a power of a prime number.
Each edge of $G_q$ is then replaced by a random 3-graph $F_q$ which is built as follows: vertices of $F_q$ are randomly split into a set $V = \{v_{ij} \ |\ 1 \leq i,j\leq \tau\}$ and $2\tau$ sets $S_1, \dots, S_{\tau}, T_1, \dots, T_{\tau}$ of roughly the same size, where $\tau = \lfloor 8 \sqrt{\log q} \rfloor$.
Edges of $F_q$ are all sets of the form $\{v_{ij}, a, b\}$, where $a \in S_i$, $b \in T_j$.

The result of the construction is a hypergraph on $cq^3$ vertices.
With a high probability its independence number does not exceed
\[
\frac{2\tau n}{q}=16q^2\sqrt{\log q},
\]
i.e. its chromatic number is at least $cq/\sqrt{\log q}$.

At the same time, this hypergraph has
\[
cq^3\tau^2\left(\frac{q+1-\tau^2}{2\tau}\right)^2=cq^5
\]
edges.
Substituting $cq/\sqrt{\log q}=r$, we get
\[
m_{C_3}(r) \leq cr^5(\log r)^{5/2}.
\]




\subsection{Proof of Theorem~\ref{thm:rExp}}

\subsubsection{The upper bound}

For any $t\geq 1$ there is a $H$-free 3-graph $G_t$ with $|V(G_t)| = t^{At}$ and $\alpha(G_t) < t$.
Then
\[
\chi(G_t) \geq \frac{|V(G_t)|}{\alpha(G_t)} > \frac{t^{At}}{t} = t^{At-1},
\]
so one may put $r = t^{At-1}$. First we prove the desired bound only for such $r$. Hence
\[
\ln r = (At-1)\ln t,
\]
so
\[
\frac{1}{A} \frac{\ln r}{\ln \ln r} = \frac{1}{A} \frac{(At-1)\ln t}{\ln((At-1)\ln t)} = (1+o(1))t.
\]
Finally,
\[
|E(G_t)| \leq |V(G_t)|^3 = t^{3At} = r^3t^3 = (1+o(1)) \left( \frac{r \ln r}{A \ln \ln r}\right)^3.
\]

Then, consider any $r$ between $(t-1)^{A(t-1)-1}$ and $t^{At-1}$. Let $A'$ be such that $r = t^{A't-1}$.
For a large enough $t$ one has
\[
t^{At/2-1} < (t-1)^{A(t-1)-1},
\]
so $A > A' > A/2$.
Consider any subgraph $G'_t$ of $G_t$ on $t^{A't}$ vertices. 
It is $H$-free and has the independence number smaller than $t$, so the chromatic number of $G'_t$ is at least $r$.
Then one may repeat the previous argument to show
\[
|E(G'_t)| \leq |V(G'_t)|^3 = t^{3A't} = r^3t^3 = (1+o(1)) \left( \frac{r \ln r}{A' \ln \ln r}\right)^3 < (1+o(1)) \left( \frac{2 r \ln r}{A \ln \ln r}\right)^3.
\]

\subsubsection{The lower bound}

Essentially, this proof follows along the proof of the lower bound in Theorem~\ref{thm:rPoly}.
First, we show that for each $\eps \in (0, 1 / b)$ there is some constant $C_\eps > 0$ such that
    \begin{equation}\label{eq:B_eps}
        m_H(r) \geq C_\eps r^3 \ln^{1/b-\eps}r.
    \end{equation}
    Suppose that $G$ is an $H$-free 3-graph with $|E(G)| < C_\eps r^3\ln^{1/b-\eps}r$ that is not $r$-colorable. Here $C_\eps$ is some constant which depends only on $\eps$ and which will be chosen later.
    Let such $r$ be minimal possible. 
    By Lemma~\ref{lm:vsmall} we have that $G[V_{big}]$ should not be $\lfloor r/2\rfloor$-colorable.
    
    Let us look at the value of $\alpha(G[V_{big}])$.
    If 
    \begin{equation}\label{eq:alphagv_big_is_big}
        \alpha(G[V_{big}]) \geq
        \frac{2C_\eps}{cr^2}\bigl(
            r^{3}\ln^{1/b-\eps}r - (r-1)^3\ln^{1/b-\eps}(r-1)
        \bigr),
    \end{equation}
    then we can paint vertices of the biggest independent set of $G[V_{big}]$ in one color.
    By removing colored vertices from $G$ we get $H$-free 3-graph that is not $(r-1)$-colorable but has less than
    \begin{equation}
        C_\eps r^3\ln^{1/b-\eps}r - \frac {cr^2}2 \cdot \frac{2C_\eps}{cr^2}\bigl(
            r^{3}\ln^{1/b-\eps}r - (r-1)^3\ln^{1/b-\eps}(r-1)
        \bigr)
        =
        C_\eps(r-1)^3\ln^{1/b-\eps}(r-1)
    \end{equation}
    edges, which contradicts the minimality of $r$.
    
    So the contrary to~\eqref{eq:alphagv_big_is_big} must hold.
    Since $r^3\ln^a r - (r-1)^3\ln^a(r-1) < 4r^2\ln^a r$ for all $r\geq 1$ and $a \geq 0$, we have
    \begin{equation}
        \alpha(G[V_{big}]) <
        \frac{8C_\eps}{c} \cdot \ln^{1/b-\eps}r.
    \end{equation}
    Due to the bound~\eqref{eq:ramsey_exp_b},
    \begin{equation}
        \left |V_{big} \right| < \left(\frac{8C_\eps}{c} \cdot \ln^{1/b-\eps}r\right)^{B((8C_\eps \ln^{1/b-\eps}r)/ c)^b}.
    \end{equation}
    We take logarithm of the right-hand side and see that when $C_\eps \leq c/8$
    \begin{equation}
        B\left(\frac{8C_\eps}c \ln^{1/b-\eps} r\right)^b \cdot \left(\ln\frac{8C_\eps}{c} + \ln \ln^{1/b-\eps}r\right)
        \leq
        B\left(\frac{8C_\eps}c\right)^b \ln^{1-b\eps} r \cdot \ln \ln^{1/b-\eps}r
        <
        B\left(\frac{8C_\eps}c\right)^b\ln^{1-b\eps} r \cdot \ln \ln^{1/b}r.
    \end{equation}
    There exists $x_\eps > 0$ such that the function $\ln^{b\eps} x / \ln\ln^{1/b} x$ is positive and strictly increases when $x \geq x_\eps$.
    Then for any $C_\eps$ such that
    \begin{equation}
        \left(\frac{8C_\eps}{c} \cdot \ln^{1/b-\eps}x\right)^{B((8C_\eps \ln^{1/b-\eps}x)/ c)^b} < x \text{ for any } x < x_\eps \text{ and }
        C_\eps^b < \frac 1B \cdot \left(\frac c{8}\right)^b \frac{\ln^{b\eps} x_\eps}{\ln \ln^{1/b} x_\eps},
    \end{equation}
    we either have $|V_{big}| < r$ if $r < x_\eps$, or we have
    \begin{equation}
        \ln |V_{big}| <
        BC_\eps^b\left(\frac{8}c\right)^b \ln^{1-b\eps} r \cdot \ln \ln^{1/b} r
        < \ln^{1-b\eps} r \cdot \ln^{b\eps} r
        = \ln r
    \end{equation}
    if $r \geq x_\eps$, which also means that $|V_{big}| < r$. So vertices of $V_{big}$ can be colored in $\lfloor r/2 \rfloor$ colors with no more than two vertices of same color. It means that there is a proper coloring of $G$ in $r$ colors. Obtained contradiction finishes the proof of~\eqref{eq:B_eps}.
    
    It remains to construct some function $g$ such that $g(r) \to 0$ as $r \to \infty$ and
    \begin{equation}
        m_H(r) \geq r^3 \ln^{1/b-g(r)} r.
    \end{equation}
    For each $n$ there is some $r_n$ such that for $r \geq r_n$
    \begin{equation}
        C_{1/n} \ln^{1/n} r > 1,
    \end{equation}
    which means that
    \begin{equation}
        m_H(r) \geq r^3 \ln^{1/b-2/n} r.
    \end{equation}
    Finally, for $r \geq r_1$ let
    \begin{equation}
        g(r) = \frac 2{\max\limits_{\substack{n \leq r\\ r_n \leq r}} n}.
    \end{equation}
    Clearly, $g(r)$ is non-increasing and can be arbitrary small, so $g(r) \xrightarrow[r\to\infty]{}0$.

    
    



\subsection{Proof of Proposition~\ref{prop:e288}}


Suppose the contrary, i.\,e.\ there is an $H$-free 3-graph $G$ with the chromatic number more than $r$ and the number of edges at most $Cr^4$. Let such $r$ be minimal possible.

Let us try to color $G[V_{big}]$ in $r'=\lfloor r/2 \rfloor$ colors greedily with respect to an arbitrary linear order $\pi$ on the vertex set. 
If it is impossible then there is a vertex $v$ such that it is the last 
vertex in edges $f_1,\dots,f_{r'}$ such that 
set $f_i \setminus \{v\}$ has color $i$. Then an arbitrary choice of one vertex from each set $f_i \setminus \{v\}$ leads to an independent set $I$ of size $r'$, since $G$ is $H$-free.

By removing $I$ from $G$ we get some 3-graph that is not $(r-1)$-colorable, so it must have more than $C(r-1)^4$ edges.
So we removed less than
\begin{equation}
    Cr^4 - C(r-1)^4 < C\cdot 4r^3
    = \frac{er^3}{72}
\end{equation}
edges. But the sum of degrees of vertices in $I$ (in the initial graph $G$) is at least $r'\cdot cr^2$, so it intersects at least
\begin{equation}
    \frac{cr^2r'}2 \geq \frac{cr^3}6 = \frac{er^3}{72}
\end{equation}
edges. That is a contradiction.

\subsection{Proof of Theorem~\ref{thm:turan}}



Let $C_2 > 0$.
Consider a 3-graph $G$ with
\begin{equation}\label{eq:r4t5}
    |E(G)| \leq \frac{C_2r^{\frac{2b}{b-1}}}{(\ln \ln r)^{1+\frac{2b}{b-1}}}.
\end{equation} 
We will show that it is $r$-colorable if $r$ is sufficiently big.
Let $V_0 = V(G)$ and
\begin{equation}
    V_i = 
    \left\{v \in V_{i-1}: 
    \deg_{G[V_{i-1}]} v \geq \frac1{C_3}\left(\frac{r}{\ln \ln r} \right )^2 \right\},
    \quad
    i \geq 1,
\end{equation}
where $C_3 > 0$ is chosen to satisfy the inequality
\begin{equation}
    \frac 1{C_3}
    \left(
        \frac{r}{\ln\ln r}
    \right)^2
    \leq
    \frac 1{3e}
    \left[
        \frac{r}{\log_b\ln r}
    \right]^2
\end{equation}
for, let's say, $r > 100$,
so, by Lemma~\ref{lm:3graph-coloring}, every 3-graph $G[V_i \setminus V_{i+1}]$ for $i\geq 0$ has a proper coloring in $\bigl[\frac{r}{\log_b\ln r}\bigr]$ colors. Clearly, $C_3$ can be chosen independently of $r$.

Now we are going to estimate sizes of $E(G[V_i])$ and $V_{i+1}$ from above for each $i \geq 0$.
As $V_{i+1} \subset V_i$, we have
\begin{equation}
    \left | E(G[V_i]) \right | \geq
    \frac 13 \sum_{v \in V_{i+1}} \deg_{G[V_i]} v
    \geq 
    \frac 13 \cdot \left | V_{i+1} \right | \cdot 
        \frac 1{C_3}
        \left(
            \frac{r}{\ln \ln r}
        \right)^2,
\end{equation}
so
\begin{equation}
    \label{eq:v_bound}
    \left| V_{i+1} \right | \leq 3C_3 
        \left(
            \frac{\ln \ln r}{r}
        \right)^2
        \left | E(G[V_i]) \right |.
\end{equation}
Also, we have bound~\eqref{eq:r4t5} for $\left | E(G[V_0]) \right | = |E(G)|$, and, by~\eqref{eq:exnH},
\begin{equation}
    \label{eq:e_bound}
    \left | E(G[V_i]) \right | \leq C_1 (\left | V_i \right |)^b \ \text{for $i \geq 1$}.
\end{equation}
Combining~\eqref{eq:e_bound} and~\eqref{eq:v_bound} and using induction on $i$, we obtain for $i\geq0$ 
\begin{equation}\label{eq:egvi_bound}
    \left | E(G[V_{i}]) \right | \leq 
        C_1^{\frac{b^i-1}{b-1}}
        C_2^{b^i}
        (3C_3)^{\frac{b^{i+1}-b}{b-1}}
        \cdot
        \frac{r^{\frac{2b}{b-1}}}
            {(\ln \ln r)^{b^{i}+\frac{2b}{b-1}}},
\end{equation}
\begin{equation}\label{eq:vi_bound}
    \left | V_{i+1} \right | \leq 
        C_1^{\frac{b^i-1}{b-1}}
        C_2^{b^i}
        (3C_3)^{\frac{b^{i+1}-1}{b-1}}        
        \cdot
         \frac{r^\frac{2}{b-1}}{(\ln \ln r)^{b^i + \frac{2}{b-1}}}.
\end{equation}
The last inequality implies that $V_N$ is empty for $N$ and $r$ large enough.
Namely, let $C = C_1^{\frac 1{b-1}}C_2(3C_3)^{\frac b{b-1}}$. Then for $r > r_0(C_1,C_3)$ from last inequality we get
\begin{equation}\label{eq:V_N}
    \left | V_{i+1} \right |
    \leq
        \frac{r^{\frac 2{b-1}}}
        {C_1^{\frac 1{b-1}} (3C_3)^{\frac 1{b-1}} \cdot (\ln \ln r)^{\frac{2b}{b-1}}}
        \bigg/
        \left(
            \frac{\ln\ln r}
                {C}
        \right)^{b^{i}}
    \leq
        r^{\frac 2{b-1}}
        \bigg /
        \left(
            \frac{\ln\ln r}C
        \right)^{b^{i}}.
\end{equation}
So for $N > \log_b \ln r$ 
\begin{equation}
    |V_{i+1}|
    \leq 
    r^{\frac 2{b-1}}
    \bigg /
    \left(
        \frac{\ln\ln r}C
    \right)^{\ln r}
    =
    \left(
        \frac{C e^{\frac 2{b-1}}}
            {\ln \ln r}
    \right)^{\ln r},
\end{equation}
which is less than 1 when $\ln \ln r > Ce^{2/(b-1)}$, i.e.
\begin{equation}
    r > e^{e^{Ce^{2/(b-1)}}}.
\end{equation}
We have divided $V$ into at most $\log_b \ln r$ sets of form $V_i\setminus V_{i+1}$, and each 3-graph $G[V_i\setminus V_{i+1}]$ has a proper coloring with at most $\frac r{\log_b\ln r}$ colors, so $G$ is $r$-colorable. Therefore, we can put $R = \max \left (100, r_0, e^{e^{Ce^{2/(b-1)}}} \right)$.

\subsection{Proof of Proposition~\ref{prop:edges_ordering}}
Suppose that a 3-graph $G$ has more than $(t-3)\binom n2$ edges. 
We will show that some subgraph of $G$ is isomorphic to $H$.

Let us repeat the following procedure: if there is an edge $g = \{v_1,v_2,v_3\}$ such that the pair $(v_1,v_2)$ is contained in at most $t-3$ edges (including $g$), remove $g$.
There will be removed at most $(t-3)\binom{n}{2}$ edges, and after that each pair of vertices will be joined by either 0 or at least $t-2$ edges. Denote resulting 3-graph by $G'$.

Let $H_i$ be a subgraph of $H$ induced by edges $h_1,\dots,h_i$.
We have removed at most $(t-3)\binom{n}{2}$ edges from $G$, so $\left | E(G') \right | \geq 1$. A subgraph of $G'$ induced by one edge is isomorphic to $H_1$. Now we will show that if there is a subgraph $H'_{i - 1} \subset G'$ isomorphic to $H_{i-1}$, then there is a subgraph isomorphic to $H_{i}$. All we have to do is find an edge which can be put in correspondence to $h_i$.
$H_{i-1}$ has an edge $h_{j(i)} \supset \{a_i, b_i\}$. Consider $v,u \in V(H'_{i - 1})$ that correspond to $a_i$ and $b_i$. There is an edge in $H'_{i - 1}$ that corresponds to $h_{j(i)}$, so $v$ and $u$ are joined by at least $t-2$ edges in $G'$. We have $\left | V(H'_{i - 1}) \right | - 2 = \left | V(H_{i-1}) \right | - 2 = i - 1 < t - 2$, so there exists $e \in E(G')$ such that $e = \{v, u, w\}$ and $w \notin V(H'_{i - 1})$.
This edge $e$ can be put in correspondence to $h_i$.

So there is a subgraph of $G'$ that is isomorphic to $H_{t-2} = H$ and, since $G'\subset G$, it is a subgraph of $G$.



\section{Discussion}
\label{diss}

\paragraph{Questions from~\cite{bohman2010coloring}.}

\begin{conjecture}
There exists a simple 3-graph $H$ for which 
\[
m_H(r) = r^{3+o(1)}.
\]
\end{conjecture}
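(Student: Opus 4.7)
The lower bound $m_H(r) \geq m(r) \gtrsim (4/e^2) r^3$ holds for any $H$, so the real challenge is the upper bound: exhibit a fixed simple 3-graph $H$ and a family of $H$-free 3-graphs with chromatic number exceeding $r$ and only $r^{3+o(1)}$ edges.

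A natural first attempt is to feed a simple balanced $H$ of high balance into Theorem~\ref{thm:bohman2010-4}. The Steiner triple system on $n$ vertices (which exists whenever $n\equiv 1,3\pmod 6$) is simple and balanced with $\rho(H) = (n(n-1)/6 - 1)/(n-3) \sim n/6$, and the theorem gives
\[
    m_H(r) = O\bigl((r^2\ln r)^{(3\rho-1)/(2\rho-1)}\bigr) = r^{\,3+1/(2\rho-1)} (\ln r)^{3/2+O(1/\rho)}.
\]
Taking $n$ large pushes the excess exponent $1/(2\rho-1)$ arbitrarily close to zero, but for any \emph{fixed} $H$ this quantity is a positive constant, so this route alone falls short of $r^{3+o(1)}$.

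My plan is therefore to refine the analysis of the random $H$-free process of~\cite{bohman2010coloring} for highly symmetric simple designs, such as a Steiner triple system, the Fano plane, or another algebraic configuration. For such $H$ the process is expected to produce a near-pseudorandom 3-graph, whose independence number should drop to $(n^2\ln n/m)^{1/2}$ up to logarithmic factors; this would yield chromatic number exceeding $r$ with only $m = r^3 (\ln r)^{O(1)}$ edges. A complementary direction is an explicit algebraic construction: start from a generalized quadrangle or a projective-plane-based incidence geometry and blow each edge up by an appropriate random 3-graph, in the spirit of the construction behind Proposition~\ref{prop:good_upper_bound}, then engineer $H$ so that the rigidity of the base geometry rules out any copy of $H$.

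The main obstacle is exactly the $1/(2\rho-1)$ slack inherent in the balanced 3-graph bound: any argument that uses only the global balance of $H$ strictly misses the target. Reaching the exponent $3$ exactly seems to require exploiting finer local statistics of $H$, such as codegree profiles, extension probabilities, or structural rigidity, and this is why the conjecture of Bohman, Frieze and Mubayi has remained open despite significant progress on neighbouring problems.
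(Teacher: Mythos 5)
This statement is a conjecture, not a theorem: the paper lists it (following~\cite{bohman2010coloring}) in the Discussion section as an open problem and offers no proof, so there is nothing in the paper to compare your argument against. Your proposal is likewise not a proof --- and, to your credit, it says so. The computations you do carry out are correct: the lower bound $m_H(r)\geq m(r)=\Omega(r^3)$ is immediate for any $H$, and plugging a simple balanced $H$ of balance $\rho$ into Theorem~\ref{thm:bohman2010-4} gives $m_H(r)=O\bigl(r^{3+1/(2\rho-1)}(\log r)^{(3\rho-1)/(2\rho-1)}\bigr)$, which for any fixed $H$ has a strictly positive excess $1/(2\rho-1)$ in the exponent. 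Your diagnosis that any argument using only the global balance of $H$ cannot reach $r^{3+o(1)}$ is exactly the obstruction that keeps the conjecture open.

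The genuine gap is therefore the entire content of the conjecture: no fixed simple $H$ and no family of $H$-free 3-graphs with chromatic number exceeding $r$ and $r^{3+o(1)}$ edges is exhibited. The two directions you sketch (a refined analysis of the $H$-free random process for highly symmetric designs, and an algebraic blow-up construction in the spirit of Proposition~\ref{prop:good_upper_bound}) are plausible research programmes, but neither the claimed independence number of the random process nor the $H$-freeness of a blown-up incidence geometry is established, and both claims are precisely where the difficulty lies. One further caution: your heuristic for a Steiner triple system quietly assumes it is balanced (that the maximum in the definition of $\rho$ is attained on the whole system), which itself would need verification before Theorem~\ref{thm:bohman2010-4} applies in the stated form. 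As written, the proposal should be read as a discussion of why the problem is hard, not as progress towards a proof.
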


\begin{problem}
Characterize all 3-graphs $H$ such that 
\[
m_H(r) = r^{3+o(1)}.
\]
\end{problem}

\paragraph{Regularity issues.}
It is known~\cite{cherkashin2020regular} that sequence $\frac{m(r)}{r^3}$ has a positive limit.
The same statement for quantity $m_H(r)$ is of interest ($H$ is assumed to be fixed).
In particular, it is unknown even for the case of simple 3-graphs, i.e. when $H = \{\{1,2,3\}, \{1,2,4\}\}$, for which the asymptotics is known: 
\[
m_H(r) = \Theta(r^4\log^2 r).
\]

\bibliography{main}
\bibliographystyle{plain}

\paragraph{Danila Cherkashin}
Institute of Mathematics and Informatics,
Bulgarian Academy of Sciences\\
Sofia 1113, 8 Acad. G. Bonchev str.\\
jiocb.orlangyr@gmail.com

\paragraph{Alexey Gordeev and Georgii Strukov}
The Euler International Mathematical Institute, St. Petersburg, Russia

\end{document}